\newtheorem{theorem}{Theorem}
\theoremstyle{plain}
\newtheorem{definition}{Definition}
\newtheorem{lemma}{Lemma}
\newtheorem{remark}{Remark}
\numberwithin{equation}{section}
\begin{document}
\title[]{General decay and blow-up of solutions for a nonlinear wave
equation with memory and fractional boundary damping terms}
\author{Salah Boulaaras$^{\text{1,2,*}}$, Fares Kamache$^{\text{4}}$, Youcef
Bouizem$^{\text{3}}$ and Rafik Guefaifia$^{\text{4}}$}
\address{$^{\text{1}}$Department of Mathematics, College of Sciences and
Arts, Al-Rass, Qassim University, Kingdom of Saudi Arabia.\\
$^{\text{2}}$Laboratory of Fundamental and Applied Mathematics of Oran
(LMFAO), University of Oran 1, Ahmed Benbella. Algeria.\\
$^{\text{3}}$Department of Mathematics, Faculty of Mathematics and
Informatics, USTOMB, Oran, Algeria\\
$^{\text{4}}$Department of Mathematics, Faculty of Exact Sciences,
University Tebessa, Tebessa, Algeria}
\email{Saleh\_boulaares@yahoo.fr}
\subjclass{35L35, 35L20.}
\keywords{ General decay, {global existence}, fractional boundary
dissipation , blow up, memory term.}
\dedicatory{}
\thanks{*Corresponding author: saleh\_boulaares@yahoofr}

\begin{abstract}
The paper studies the global existence and general decay of solutions using
Lyaponov functional for a nonlinear wave equation, taking into account the
fractional derivative boundary condition and memory term. In addition, we
establish the blow up of solutions with nonpositive initial energy.
\end{abstract}

\maketitle

\section{Introduction}

Extraordinary differential equations, also known as fractional differential
equations are a generalization of differential equations through fractional
calculus. Much attention has been accorded to fractional partial
differential equations during the past two decades due to the many chemical
engineering, biological, ecological and electromagnetism phenomena that are
modeled by initial boundary value problems with fractional boundary
conditions. See Tarasov \cite{18}, Magin \cite{13}, and Val\'{e}rio et al
\cite{19}. \newline
In this work we consider the nonlinear wave equation
\begin{equation}
\left\{
\begin{array}{ll}
u_{tt}-\Delta u+au_{t}+\int_{0}^{t}g\left( t-s\right) \Delta u\left(
s\right) ds=|u|^{p-2}u, & x\in \Omega ,t>0, \\
\frac{\partial u}{\partial \nu }=-b\partial _{t}^{\alpha ,\eta }u, & x\in
\Gamma _{0},t>0, \\
u=0, & x\in \Gamma _{1},t>0, \\
u(x,0)=u_{0}(x),\quad u_{t}(x,0)=u_{1}(x), & x\in \Omega ,%
\end{array}%
\right.  \label{1.1}
\end{equation}%
where $\Omega $ is a bounded domain in $%
%TCIMACRO{\U{211d} }%
%BeginExpansion
\mathbb{R}
%EndExpansion
^{n}$, $n\geq 1$ with a smooth boundary $\partial \Omega $ of class $C^{2}$
and $\nu $ is the unit outward normal to $\partial \Omega =\Gamma _{0}\cup
\Gamma _{1}$, where $\Gamma _{0}$ and $\Gamma _{1}$ are closed subsets of $%
\partial \Omega $ with $\Gamma _{0}\cap \Gamma _{1}=\emptyset $.\newline
$a,b>0$, $\ p>2$, and $\partial _{t}^{\alpha ,\eta }$ with $0<\alpha <1$ is
the Caputo's generalized fractional derivative (see \cite{7} and \cite{8})
defined by:
\begin{equation*}
\partial _{t}^{\alpha ,\eta }u(t)=\frac{1}{\Gamma (1-\alpha )}%
\int_{0}^{t}(t-s)^{-\alpha }e^{-\eta (t-s)}u_{s}(s)ds,\quad \eta \geq 0,
\end{equation*}%
where $\Gamma $ is the usual Euler gamma function. It can also be expressed
by
\begin{equation}
\partial _{t}^{\alpha ,\eta }u(t)=I^{1-\alpha ,\eta }u^{\prime }(t),
\label{1.2}
\end{equation}%
where $I^{\alpha ,\eta }$ is the exponential fractional integro-differential
operator given by%
\begin{equation*}
I^{\alpha ,\eta }u(t)=\frac{1}{\Gamma (\alpha )}\int_{0}^{t}(t-s)^{\alpha
-1}e^{-\eta (t-s)}u(s)ds,\quad \eta \geq 0.
\end{equation*}

In the context of boundary dissipations of fractional order problems, the
main research focus is on asymptotic stability of solutions starting by
writing the equations as an augmented system (see \cite{15}). Then, various
techniques are used such as LaSalle's invariance principle and multiplier
method mixed with frequency domain, (see \cite{2}, \cite{3}, \cite{6}, \cite%
{7}, \cite{8}, \cite{16}, \cite{18}).\newline
In \cite{2}, Akil and Wehbe used semigoup theory of linear operators to
prove stability of the following problem
\begin{equation*}
\left\{
\begin{array}{ll}
u_{tt}-\Delta u=0, & x\in \Omega ,\quad t>0, \\
\frac{\partial u}{\partial \nu }=-b\partial _{t}^{\alpha ,\eta }u, & x\in
\Gamma _{0},\quad t>0,\quad \eta \geq 0,\quad 0<\alpha <1, \\
u=0, & x\in \Gamma _{1},\ t>0, \\
u(x,0)=u_{0}(x),\ u_{t}(x,0)=u_{1}(x), & x\in \Omega .%
\end{array}%
\right.
\end{equation*}

In \cite{14}, Mbodje carried on the study by investigating the decay rate of
energy to prove strong asymptotic stability if $\eta =0$, and a polynomial
decay rate $E(t)\leq \displaystyle\frac{c}{t}$ if $\eta >0$. \newline

Later in \cite{11}, Kirane and Tatar proved global existence and exponential
decay of the following wave equation with mild internal dissipation

\begin{equation}
\left\{
\begin{array}{ll}
u_{tt}(x,t)-\Delta u(x,t)+au_{t}(x,t)+\int_{0}^{t}g\left( t-s\right) \Delta
u\left( s\right) ds=f(x,t), & x\in \Omega ,t>0, \\
\frac{\partial u}{\partial \nu }(x,t)+%
\int_{0}^{t}K(x,t-s)u_{s}(x,s)ds=h(x,t), & x\in \Gamma _{0},t>0, \\
u_{0}(x,t)=0 & x\in \Gamma _{1},t>0, \\
u(x,0)=u_{0}(x)\quad u_{t}(x,0)=u_{1}(x) & x\in \Omega .%
\end{array}%
\right.  \label{1.3}
\end{equation}%
where the homogeneous case was also considered in \cite{4} by Alabau and al,
in order to establish polynomial stability, then in \cite{5} for exponential
decay.\newline
Dai and Zhang \cite{8} replaced $\int_{0}^{t}K(x,t-s)u_{s}(x,s)ds$ by $%
\partial _{t}^{\alpha}u(x,t)$ and $h(x,t)$ by $|u|^{m-1}u(x,t)$, and managed
to prove exponential growth for the same problem.

Noting that the nonlinear wave equation with boundary fractional damping
case was first considered by authors in \cite{R20}, where they used the
augmented system to prove the exponential stability and blow up of solutions
in finite time. \newline

Motivated by our recent work in \cite{R20} and based on the construction of
a Lyapunov function, we prove in this paper under suitable conditions on the
initial data the stability of a wave equation with fractional damping and
memory term. This technique of proof was recently used by \cite{9} and \cite%
{R20} to study the exponential decay of a system of nonlocal singular
viscoelastic equations. \newline
Here we also consider three different cases on the sign of the initial
energy as recently examined by Zarai and al \cite{20}, where they studied
the blow up of a system of nonlocal singular viscoelastic equations.\newline

The organization of our paper is as follows. We start in sect.2 by giving
some lemmas and notations in order to reformulate our problem (1.1) into an
augmented system. In the following section, we use the potential well theory
to prove the global existence result. Then, the general decay result in
section 4. In sect.5, following a direct approach, we prove blow up of
solutions.

\section{Preliminaries}

Let us introduce some notations, assumptions, and lemmas that are effective
for proving our results. \newline

Assume that the relaxation function $g$ satisfies

$\left( G_{1}\right) $ $g:%
%TCIMACRO{\U{211d} }%
%BeginExpansion
\mathbb{R}
%EndExpansion
_{+}\rightarrow
%TCIMACRO{\U{211d} }%
%BeginExpansion
\mathbb{R}
%EndExpansion
_{+}$ is a nonincreasing differentiable function with
\begin{equation}
g\left( 0\right) >0,\text{ \ \ \ \ \ }1-\int_{0}^{\infty }g\left( s\right)
ds=l>0  \label{2.1}
\end{equation}

$\left( G_{2}\right) $ There exists a constant $\xi >0$ such that%
\begin{equation}
g^{\prime }\left( t\right) \leq -\xi g\left( t\right) ,\text{ \ \ \ \ \ }%
\forall t>0.  \label{2.2}
\end{equation}

We denote

\begin{equation}
\left( g\circ u\right) \left( t\right) =\int_{0}^{t}g\left( t-s\right) %
\Arrowvert u\left( t\right) -u\left( s\right) \Arrowvert^{2}ds,  \label{2.3}
\end{equation}
and
\begin{equation*}
\aleph =\{w\in H_0^1\vert I(w)>0\} \cup\{0\},
\end{equation*}

\begin{equation*}
H_{\Gamma_1} ^1 (\Omega)=\left\{u\in H ^1 (\Omega),u \vert _{\Gamma_1}
=0\right\}.
\end{equation*}

\begin{lemma}
(Sobolev-Poincar\'{e} Inequality, see \cite{16})\newline
If either $1\leq q\leq \frac{N+2}{N-2}$, $\left( N\geq 3\right) $ or $1\leq
q\leq +\infty $ $\left( N=2\right) $. Then there exists $C_{\ast }>0$ such
that
\begin{equation*}
\Vert u\Vert _{q+1}\leq C_{\ast }\Vert \nabla u\Vert _{2}, \ \ \forall u\in
H_{0}^{1}(\Omega ),
\end{equation*}
\end{lemma}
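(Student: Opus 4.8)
The plan is to reduce the stated inequality to the standard Sobolev embedding combined with the finiteness of the measure of $\Omega$. The core estimate is the borderline case: the Gagliardo--Nirenberg--Sobolev inequality $\Vert u\Vert_{2^{*}}\leq C\Vert\nabla u\Vert_{2}$ with critical exponent $2^{*}=\frac{2N}{N-2}$ for $N\geq 3$. Once this is established, every subcritical exponent $q+1<2^{*}$ follows by Hölder's inequality precisely because $\Omega$ is bounded, so that $|\Omega|<\infty$.

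First I would prove the critical inequality on the dense subspace $C_{c}^{\infty}(\Omega)$. Starting from the elementary slicing estimate $\Vert w\Vert_{N/(N-1)}\leq\prod_{i=1}^{N}\Vert\partial_{i}w\Vert_{1}^{1/N}\leq\Vert\nabla w\Vert_{1}$, obtained by writing $w(x)$ as the integral of its partial derivatives along each coordinate axis and iterating Hölder, I would apply it to $w=|u|^{\gamma}$ with the choice $\gamma=\frac{2(N-1)}{N-2}$. Estimating the resulting right-hand side $\int|u|^{\gamma-1}|\nabla u|$ by Hölder with conjugate exponents $2$ and $2$, and using that $\gamma$ has been tuned so that $\frac{\gamma N}{N-1}=2(\gamma-1)=2^{*}$, one divides out a common power of $\Vert u\Vert_{2^{*}}$ and obtains $\Vert u\Vert_{2^{*}}\leq C\Vert\nabla u\Vert_{2}$. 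Extending this to all $u\in H_{0}^{1}(\Omega)$ is then a routine density argument: approximate $u$ by functions in $C_{c}^{\infty}(\Omega)$ in the $H_{0}^{1}$ norm and pass to the limit.

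Next, for a subcritical exponent $1\leq q\leq\frac{N+2}{N-2}$, equivalently $q+1\leq 2^{*}$, I would invoke Hölder's inequality in the form $\Vert u\Vert_{q+1}\leq|\Omega|^{\theta}\Vert u\Vert_{2^{*}}$ with $\theta=\frac{1}{q+1}-\frac{1}{2^{*}}\geq 0$, which is finite exactly because $\Omega$ is bounded. Combining with the critical inequality yields $\Vert u\Vert_{q+1}\leq C_{*}\Vert\nabla u\Vert_{2}$ with $C_{*}=C|\Omega|^{\theta}$. For $N=2$ the argument is analogous but simpler, since $H_{0}^{1}(\Omega)$ embeds continuously into $L^{p}(\Omega)$ for every finite $p$, so the same Hölder reduction delivers the claim for all $1\leq q<\infty$.

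The main obstacle is the sharp borderline inequality in the first step. The delicate point is the algebraic bookkeeping of the exponents when the $L^{1}$-gradient slicing estimate is applied to $|u|^{\gamma}$: one must verify that the single parameter $\gamma$ simultaneously makes the norm on the left and the Hölder factor on the right land on the same exponent $2^{*}$, so that the power of $\Vert\nabla u\Vert_{2}$ comes out to be exactly one after cancellation. Everything beyond this — the density passage and the Hölder interpolation over a set of finite measure — is standard and presents no real difficulty.
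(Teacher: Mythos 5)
Your proposal is mathematically correct, but note that the paper does not prove this lemma at all: it is stated as a known result with a citation (reference \cite{16}), and no proof appears in the text, so any correct proof you give is necessarily a ``different route'' in the sense that you are supplying what the paper merely quotes. What you have written is the classical self-contained argument: the Gagliardo--Nirenberg slicing estimate $\Vert w\Vert _{N/(N-1)}\leq \prod_{i=1}^{N}\Vert \partial _{i}w\Vert _{1}^{1/N}\leq \Vert \nabla w\Vert _{1}$ applied to $w=|u|^{\gamma }$ with $\gamma =\frac{2(N-1)}{N-2}$, and your exponent bookkeeping checks out: $\frac{\gamma N}{N-1}=2(\gamma -1)=\frac{2N}{N-2}=2^{\ast }$, so after H\"{o}lder with exponents $(2,2)$ one divides by $\Vert u\Vert _{2^{\ast }}^{\gamma -1}$ (the inequality being trivial when this vanishes) to get the critical embedding on $C_{c}^{\infty }(\Omega )$, extends by density, and then the subcritical range $q+1\leq 2^{\ast }$ follows from H\"{o}lder on the bounded domain --- which is exactly where the hypothesis $q\leq \frac{N+2}{N-2}$ enters, since $q+1\leq 2^{\ast }$ is equivalent to it. One caveat: as literally stated the lemma allows $q\leq +\infty $ when $N=2$, but that endpoint is false ($H_{0}^{1}$ does not embed into $L^{\infty }$ in two dimensions), and your proof correctly delivers only $1\leq q<+\infty $; this is a defect of the paper's formulation rather than of your argument. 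In short, your route makes the preliminary lemma self-contained at the cost of length, whereas the paper's approach is simply to invoke the standard embedding theorem from the literature.
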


\begin{lemma}
(Trace -Sobolev embedding ) \newline
For all $p$ such that
\begin{equation}
2<p\leq \frac{2(n-1)}{n-2}  \label{2.4}
\end{equation}%
we have
\begin{equation*}
H_{\Gamma _{1}}^{1}(\Omega )\hookrightarrow L^{p}(\Gamma _{0}).
\end{equation*}%
We denote by $B_{q}$ the embedding constant i.e.,
\begin{equation*}
\Arrowvert u\Arrowvert_{p,\Gamma _{0}}\leq B_{q}\Arrowvert u\Arrowvert_{2}.
\end{equation*}
\end{lemma}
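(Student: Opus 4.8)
The plan is to realize this boundary embedding as a composition of the boundary trace operator with a Sobolev embedding on the boundary manifold, and then to convert the resulting $H^{1}$-bound into a gradient bound by Poincar\'{e}'s inequality. First I would invoke the classical trace theorem: since $\partial \Omega$ is of class $C^{2}$, the trace map $\gamma :H^{1}(\Omega )\rightarrow H^{1/2}(\partial \Omega )$ is well defined and bounded, so that $\Arrowvert \gamma u\Arrowvert _{H^{1/2}(\partial \Omega )}\leq C_{1}\Arrowvert u\Arrowvert _{H^{1}(\Omega )}$ for every $u\in H^{1}(\Omega )$. This is where the half-derivative is lost, and it reduces the problem to an embedding question on the compact hypersurface $\partial \Omega$.

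Next I would use that $\partial \Omega$ is a compact $C^{2}$ manifold of dimension $n-1$, on which the fractional Sobolev embedding $H^{s}(\partial \Omega )\hookrightarrow L^{p}(\partial \Omega )$ holds whenever $\tfrac{1}{p}\geq \tfrac{1}{2}-\tfrac{s}{n-1}$. Taking $s=\tfrac{1}{2}$ gives the critical threshold $\tfrac{1}{p}\geq \tfrac{1}{2}-\tfrac{1}{2(n-1)}=\tfrac{n-2}{2(n-1)}$, which is exactly the range $2<p\leq \tfrac{2(n-1)}{n-2}$ recorded in \eqref{2.4}; in this range one has $\Arrowvert w\Arrowvert _{L^{p}(\partial \Omega )}\leq C_{2}\Arrowvert w\Arrowvert _{H^{1/2}(\partial \Omega )}$. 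Composing the two estimates and then restricting the surface integration from $\partial \Omega$ to the closed subset $\Gamma _{0}$, which can only decrease the $L^{p}$ norm, yields $\Arrowvert u\Arrowvert _{p,\Gamma _{0}}\leq C_{2}\Arrowvert \gamma u\Arrowvert _{H^{1/2}(\partial \Omega )}\leq C_{1}C_{2}\Arrowvert u\Arrowvert _{H^{1}(\Omega )}$.

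Finally, to reach the form asserted in the lemma I would restrict to $H_{\Gamma _{1}}^{1}(\Omega )$, where the trace vanishes on $\Gamma _{1}$: since $\Gamma _{1}$ has positive surface measure, Poincar\'{e}'s inequality gives $\Arrowvert u\Arrowvert _{H^{1}(\Omega )}\leq C_{3}\Arrowvert \nabla u\Arrowvert _{2}$, so that absorbing all the constants into a single $B_{q}$ produces $\Arrowvert u\Arrowvert _{p,\Gamma _{0}}\leq B_{q}\Arrowvert \nabla u\Arrowvert _{2}$, which is the claimed inequality. The main obstacle I anticipate is pinning down the \emph{sharp} trace exponent $\tfrac{2(n-1)}{n-2}$ rather than the interior Sobolev exponent $\tfrac{2n}{n-2}$: one must correctly account for the loss of half a derivative in passing to the boundary and for the drop in dimension from $n$ to $n-1$, and then verify that the endpoint $p=\tfrac{2(n-1)}{n-2}$ is still admissible. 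The remaining steps, namely the boundedness of the trace and the application of Poincar\'{e}, are standard once the geometric setup is fixed.
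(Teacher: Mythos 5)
The paper never proves this lemma: it is recorded as a known trace--Sobolev embedding and then used as a black box, so there is no internal proof to compare yours against. Your derivation is correct and is the standard one: trace theorem $H^{1}(\Omega )\rightarrow H^{1/2}(\partial \Omega )$ for a $C^{2}$ boundary, then the critical fractional Sobolev embedding $H^{1/2}(\partial \Omega )\hookrightarrow L^{p}(\partial \Omega )$ on the compact $(n-1)$-dimensional manifold $\partial \Omega$ (which indeed holds at the endpoint $p=\frac{2(n-1)}{n-2}$, continuously though not compactly), restriction of the surface integral to $\Gamma _{0}$, and finally Poincar\'{e}--Friedrichs on $H_{\Gamma _{1}}^{1}(\Omega )$. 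Three caveats are worth making explicit. First, the Poincar\'{e} step requires $\Gamma _{1}$ to have positive surface measure (or positive capacity); the paper assumes this only implicitly through the boundary condition $u=0$ on $\Gamma _{1}$, so you should state it as a hypothesis. Second, the inequality as printed in the lemma has $\Arrowvert u\Arrowvert _{2}$ on the right-hand side, which cannot be correct for a trace estimate (an $L^{2}(\Omega )$ bound controls no boundary values); your reading of it as $\Arrowvert \nabla u\Arrowvert _{2}$ is the one consistent with how $B_{q}$ is actually invoked later in the paper, in (4.12) and (5.9), so you have proved the intended statement rather than the misprinted one. Third, your exponent computation presupposes $n\geq 3$; for $n=2$ the threshold $\frac{2(n-1)}{n-2}$ degenerates and the embedding holds for every finite $p$, which is the natural reading of (2.4) in that case.
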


\begin{lemma}
( \cite{20}, p. 5, Lemma 2 or \cite{12}, p. 1406 ,Lemma 4.1) \newline
Consider a nonegative function $B(t)\in C^{2}(0,\infty )$ satisfying
\begin{equation}
B^{\prime \prime }(t)-4(\delta +1)B^{\prime }(t)+4(\delta +1)B(t)\geq 0,
\label{2.5}
\end{equation}%
where $\delta >0$.\newline
If
\begin{equation}
B^{\prime }(0)>r_{2}B(0)+l_{0},  \label{2.6}
\end{equation}%
then
\begin{equation}
B^{\prime }(t)\geq l_{0}, \ \ \forall t >0  \label{2.7}
\end{equation}%
where $l_{0} \in \mathbb{R}$, $r_2$ represents the smallest root of the
equation
\begin{equation}
r^{2}-4(\delta +1)r+(\delta +1)=0.  \label{2.8}
\end{equation}
i.e. $r_{2}=2(\delta +1)-2\sqrt{(\delta +1)\delta }.$
\end{lemma}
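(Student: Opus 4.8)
The plan is to read \eqref{2.5} as a constant–coefficient second–order differential inequality and to collapse it to a first–order one by factoring through the characteristic roots. Write $r_{1}=2(\delta+1)+2\sqrt{(\delta+1)\delta}$ and $r_{2}=2(\delta+1)-2\sqrt{(\delta+1)\delta}$ for the two roots, so $r_{1}>r_{2}>0$ since $\delta>0$. The decisive algebraic fact I will use is that, by Vieta, $r_{1}+r_{2}=4(\delta+1)$ and $r_{1}r_{2}=4(\delta+1)$; note this forces the constant term of \eqref{2.8} to read $4(\delta+1)$ rather than $(\delta+1)$ if it is to match the quoted value of $r_{2}$, and it is this relation $r_{1}r_{2}=4(\delta+1)$ that makes the factorization exact. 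The strict positivity of both roots is exactly what the final comparison will rely on.

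First I would introduce $V(t):=B'(t)-r_{2}B(t)$ and compute
\[
V'(t)-r_{1}V(t)=B''(t)-(r_{1}+r_{2})B'(t)+r_{1}r_{2}B(t)=B''(t)-4(\delta+1)B'(t)+4(\delta+1)B(t)\ge 0,
\]
the last inequality being precisely \eqref{2.5}. Hence $V$ obeys the first–order inequality $V'(t)\ge r_{1}V(t)$, equivalently $\bigl(e^{-r_{1}t}V(t)\bigr)'\ge 0$. Integrating this elementary integrating–factor (Gronwall) relation from $0$ to $t$ yields $e^{-r_{1}t}V(t)\ge V(0)$, that is,
\[
B'(t)-r_{2}B(t)\ge\bigl(B'(0)-r_{2}B(0)\bigr)e^{r_{1}t}.
\]
Because $B(t)\ge 0$ and $r_{2}>0$, the term $r_{2}B(t)$ is nonnegative and may be discarded to give $B'(t)\ge\bigl(B'(0)-r_{2}B(0)\bigr)e^{r_{1}t}$.

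It then remains to insert the hypothesis \eqref{2.6}, namely $B'(0)-r_{2}B(0)>l_{0}$. The estimate closes cleanly when $B'(0)-r_{2}B(0)\ge 0$: using $e^{r_{1}t}\ge 1$ for $t\ge 0$ (here $r_{1}>0$ is essential), I obtain $B'(t)\ge\bigl(B'(0)-r_{2}B(0)\bigr)e^{r_{1}t}\ge B'(0)-r_{2}B(0)>l_{0}$, which is \eqref{2.7}. In particular this covers every $l_{0}\ge 0$, since then \eqref{2.6} and $B(0)\ge 0$ already force $B'(0)-r_{2}B(0)>l_{0}\ge 0$. I expect the genuinely delicate point — the one step where the argument could break — to be exactly this sign issue: if $B'(0)-r_{2}B(0)<0$ (which can happen only when $l_{0}<0$), then the factor $e^{r_{1}t}$ pushes the bound $\bigl(B'(0)-r_{2}B(0)\bigr)e^{r_{1}t}$ \emph{downward} rather than upward, and since $l_{0}e^{r_{1}t}\le l_{0}$ the one–line comparison no longer yields $B'(t)\ge l_{0}$. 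For this reason I would state and use the lemma with the understanding $l_{0}\ge 0$ (which is the regime occurring in the blow–up estimates of Section~5), where the chain above gives the conclusion outright; handling a negative $l_{0}$ would require a finer, nonpointwise argument exploiting the global nonnegativity of $B$, and is the only part of the proof I would flag as nonroutine.
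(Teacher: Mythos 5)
The paper does not prove this lemma at all: it is imported, statement only, from \cite{20} and \cite{12}, so there is no internal argument to compare yours against; the relevant benchmark is the classical Li--Tsai-type proof used in those sources, and your proposal is, in substance, exactly that argument — factor the characteristic operator through $V=B'-r_{2}B$, observe $(e^{-r_{1}t}V)'\ge 0$, then use $B\ge 0$ and $r_{2}>0$ to pass from $V$ back to $B'$. Your proof is correct in the regime you isolate, and both of your side remarks are right and worth recording. First, the constant term of (2.8) must indeed be $4(\delta+1)$: only then is the displayed $r_{2}$ actually a root, and only then does Vieta give $r_{1}+r_{2}=r_{1}r_{2}=4(\delta+1)$, which is what makes your factorization reproduce (2.5). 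Second, the sign restriction you flag is not a limitation of your method but a genuine defect of the statement: for $l_{0}<0$ the lemma as printed is false. For instance, with $(r_{1}-r_{2})m=|l_{0}|/2$ and $M=me^{(r_{1}-r_{2})T}$, the function $B(t)=Me^{r_{2}t}-me^{r_{1}t}$ solves (2.5) with equality, is nonnegative on $[0,T]$, and satisfies $B'(0)-r_{2}B(0)=-|l_{0}|/2>l_{0}$, yet $B'(T)=-\tfrac{|l_{0}|}{2}e^{r_{1}T}<l_{0}$ as soon as $e^{r_{1}T}>2$; perturbing $M$ slightly and switching on a nonnegative forcing $B''-4(\delta+1)B'+4(\delta+1)B=f\ge 0$ after time $T$ keeps $B\ge 0$ on all of $(0,\infty)$, so no proof can dispense with your hypothesis $B'(0)-r_{2}B(0)\ge 0$. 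This causes no damage to the paper itself, since in Section 5 the lemma is only ever invoked with the nonnegative constant $a\|u_{0}\|_{2}^{2}$ playing the role of $l_{0}$ (see (5.33)), which is precisely the case your argument covers.
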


\begin{lemma}
(\cite{20}, p. 5, Lemma 3 or \cite{12}, p. 1406 ,Lemma 4.2)\newline
Let $J\left(t\right) $ be a non-increasing function on $\ \left[
t_{0},\infty \right) $ verifying the differential inequality
\begin{equation}
J^{\prime }\left( t\right) ^{2}\geq \alpha +bJ\left( t\right) ^{2+\frac{1}{%
\delta }},\ \ t\geq t_{0} \geq 0,  \label{2.9}
\end{equation}%
\newline
where $\alpha >0,\ b\in
%TCIMACRO{\U{211d} }%
%BeginExpansion
\mathbb{R}
%EndExpansion
,$\ then there exists $T^{\ast } >0$\ such that
\begin{equation}
\lim_{t\rightarrow T^{\ast -}}J\left( t\right) =0,  \label{2.10}
\end{equation}%
\newline
with the following upper bound cases for $T^{\ast }$ \newline

$\mathbf{(i)}$ When $b<0$ and $J(t_{0})<\min \left\{ 1,\sqrt{\alpha /(-b)}%
\right\} $%
\begin{equation}
T^{\ast }\leq t_{0}+\frac{1}{\sqrt{-b}}\ln \frac{\sqrt{\frac{\alpha }{-b}}}{%
\sqrt{\frac{\alpha }{-b}}-J(t_{0})}.  \label{2.11}
\end{equation}%
\newline
$\mathbf{(ii)}$ When $b=0,$
\begin{equation}
T^{\ast }\leq t_{0}+\frac{J(t_{0})}{\sqrt{\alpha }}.  \label{2.12}
\end{equation}%
\newline
$\mathbf{(iii)}$ When $b>0,$
\begin{equation}
T^{\ast }\leq \frac{J(t_{0})}{\sqrt{\alpha }}  \label{2.13}
\end{equation}%
\newline
or
\begin{equation}
T^{\ast }\leq t_{0}+2^{\frac{3\delta +1}{2\delta }}\frac{\delta c}{\sqrt{%
\alpha }}\left( 1-\left[ 1+cJ(t_{0})\right] ^{\frac{1}{2\delta }}\right) ,
\label{2.14}
\end{equation}%
\newline
where%
\begin{equation*}
c=\left( \frac{b}{\alpha }\right) ^{\delta /\left( 2+\delta \right) }.
\end{equation*}
\end{lemma}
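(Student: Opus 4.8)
The plan is to exploit the monotonicity of $J$ to turn the differential inequality \eqref{2.9} into a separable first-order inequality, integrate it, and read off finite-time extinction together with the quantitative bounds on $T^{*}$ from the resulting improper integral. First I would note that since $J$ is non-increasing on $[t_{0},\infty)$ we have $J'(t)\leq 0$, so $\left(J'(t)\right)^{2}=\left(-J'(t)\right)^{2}$ and \eqref{2.9} yields
\begin{equation*}
-J'(t)\geq \sqrt{\alpha +bJ(t)^{2+\frac{1}{\delta}}}\geq 0,\qquad t\geq t_{0}.
\end{equation*}
One must check the radicand stays positive: as $J$ decreases and, in the delicate case $b<0$, the hypotheses $J(t_{0})<1$ and $J(t_{0})<\sqrt{\alpha/(-b)}$ force $bJ^{2+1/\delta}\geq bJ(t_{0})^{2}>-\alpha$, so $\alpha+bJ^{2+1/\delta}>0$ throughout; when $b\geq 0$ this is immediate.

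Next I would separate variables: dividing by $\sqrt{\alpha+bJ^{2+1/\delta}}$ gives $-J'(t)\big/\sqrt{\alpha+bJ(t)^{2+1/\delta}}\geq 1$, and integrating from $t_{0}$ to $t$ and substituting $w=J(s)$ produces
\begin{equation*}
\int_{J(t)}^{J(t_{0})}\frac{dw}{\sqrt{\alpha+bw^{2+\frac{1}{\delta}}}}\geq t-t_{0}.
\end{equation*}
Since $\alpha>0$ the integrand is finite at $w=0$, so the left side stays below the constant $\int_{0}^{J(t_{0})}dw\big/\sqrt{\alpha+bw^{2+1/\delta}}$ as long as $J(t)>0$; this is incompatible with $t\to\infty$, so $J$ must reach $0$ at a finite time $T^{*}$, establishing \eqref{2.10}, and letting $J(t)\to 0$ gives the master estimate
\begin{equation*}
T^{*}\leq t_{0}+\int_{0}^{J(t_{0})}\frac{dw}{\sqrt{\alpha+bw^{2+\frac{1}{\delta}}}},
\end{equation*}
from which all three bounds follow.

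It then remains to estimate this integral in each case. For \textbf{(ii)} $b=0$ the integrand is the constant $1/\sqrt{\alpha}$, giving exactly $J(t_{0})/\sqrt{\alpha}$ and hence \eqref{2.12}. For \textbf{(i)} $b<0$, write $-b=|b|$ and proceed in two steps: since $J(t_{0})<1$ gives $w^{2+1/\delta}\leq w^{2}$ on $[0,J(t_{0})]$, we have $\alpha+bw^{2+1/\delta}\geq\alpha-|b|w^{2}$; and since $J(t_{0})<\sqrt{\alpha/|b|}$ the elementary inequality $\sqrt{\alpha-|b|w^{2}}\geq\sqrt{\alpha}-\sqrt{|b|}\,w$ holds. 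Thus the integrand is majorized by $(\sqrt{\alpha}-\sqrt{|b|}\,w)^{-1}$, whose integral equals $\tfrac{1}{\sqrt{|b|}}\ln\tfrac{\sqrt{\alpha}}{\sqrt{\alpha}-\sqrt{|b|}\,J(t_{0})}$, and rewriting $\sqrt{\alpha}\big/(\sqrt{\alpha}-\sqrt{|b|}\,J(t_{0}))=\sqrt{\alpha/|b|}\big/(\sqrt{\alpha/|b|}-J(t_{0}))$ recovers \eqref{2.11} precisely. For \textbf{(iii)} $b>0$ the crude bound $\alpha+bw^{2+1/\delta}\geq\alpha$ gives \eqref{2.13}, while the sharper estimate \eqref{2.14} follows from a change of variable normalizing the parameter $c=(b/\alpha)^{\delta/(2+\delta)}$ and estimating the resulting integrand by a power of the new variable, as in \cite{20} and \cite{12}.

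The main obstacle will be the cases $b\neq 0$, where the exponent $2+\frac{1}{\delta}$ is not an integer and $\int_{0}^{J(t_{0})}(\alpha+bw^{2+1/\delta})^{-1/2}\,dw$ has no elementary closed form. The whole difficulty is to replace the true integrand by an explicitly integrable majorant without losing the stated constants: the restriction $J(t_{0})<1$ (so that $w^{2+1/\delta}\leq w^{2}$) together with the linear lower bound $\sqrt{\alpha-|b|w^{2}}\geq\sqrt{\alpha}-\sqrt{|b|}\,w$ is exactly what converts the non-closed-form expression into the logarithm of \eqref{2.11}, and the scaling argument is what produces the power bound \eqref{2.14}; verifying that these majorizations hold on the whole range $[0,J(t_{0})]$ under the given hypotheses is the crux of the argument.
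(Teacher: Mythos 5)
The paper never proves this lemma --- it is imported with a citation from \cite{20} and \cite{12} --- so your proposal must stand on its own rather than be compared with an in-paper argument. Its core is sound: monotonicity gives $-J'(t)\geq\sqrt{\alpha+bJ(t)^{2+1/\delta}}$, separation of variables gives the master bound $T^{\ast}\leq t_{0}+\int_{0}^{J(t_{0})}\bigl(\alpha+bw^{2+1/\delta}\bigr)^{-1/2}\,dw$ together with finite-time vanishing \eqref{2.10}, and your handling of cases (i) and (ii) is complete and correct: the radicand stays positive under $J(t_{0})<\min\bigl\{1,\sqrt{\alpha/(-b)}\bigr\}$, the chain $\alpha+bw^{2+1/\delta}\geq\alpha-|b|w^{2}\geq\bigl(\sqrt{\alpha}-\sqrt{|b|}\,w\bigr)^{2}$ holds on all of $[0,J(t_{0})]$, and the logarithmic primitive reproduces \eqref{2.11} exactly, while $b=0$ gives \eqref{2.12} on the nose.

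Case (iii), however, contains two genuine gaps. First, \eqref{2.14} is never proved: ``follows from a change of variable \ldots as in \cite{20} and \cite{12}'' is a citation, not an argument, and this is the only estimate in the lemma that still requires real work once the master bound is in place. Worse, \eqref{2.14} as printed cannot be proved by any argument: since $[1+cJ(t_{0})]^{1/(2\delta)}>1$, its right-hand side is negative, while $T^{\ast}\geq t_{0}\geq 0$; the intended inequality (compare \eqref{5.39} and the cited sources) carries the exponent $-1/(2\delta)$, so a complete proof must first identify the corrected statement and then establish it by an explicit computation (e.g.\ a scaling that normalizes $b/\alpha$, which is where the constants $c$ and $2^{(3\delta+1)/(2\delta)}$ originate) --- none of which appears in your sketch. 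Second, your crude bound $\alpha+bw^{2+1/\delta}\geq\alpha$ yields $T^{\ast}\leq t_{0}+J(t_{0})/\sqrt{\alpha}$, which is not \eqref{2.13}: the printed \eqref{2.13} has no $t_{0}$, and no argument from hypotheses that only constrain $J$ on $[t_{0},\infty)$ can deliver it, since continuity and $J(t_{0})>0$ force $T^{\ast}>t_{0}$, contradicting \eqref{2.13} whenever $t_{0}>J(t_{0})/\sqrt{\alpha}$. Either \eqref{2.13} is implicitly a $t_{0}=0$ statement or it is another typo inherited from the sources; your write-up should flag this explicitly instead of asserting that the crude bound ``gives \eqref{2.13}''.
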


\begin{definition}
We say that $u$ is a blow-up solution of $\ (1.1)$ at finite time $T^{\ast }$
if
\begin{equation}
\lim_{t\rightarrow T^{\ast -}} \dfrac{1}{\left( \Arrowvert\nabla u\Arrowvert%
_{2}\right)}=0.  \label{2.15}
\end{equation}
\end{definition}

\begin{theorem}
(\cite{14}, Theorem 1) \newline
Consider the constant
\begin{equation*}
\varrho =(\pi )^{-1}\sin {(\alpha \pi )}
\end{equation*}%
and the function $\mu $ given by
\end{theorem}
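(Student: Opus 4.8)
The plan is to verify the input–output identity that constitutes Mbodje's theorem: with $\mu(\xi)=|\xi|^{(2\alpha-1)/2}$, the quantity $\varrho\int_{-\infty}^{+\infty}\mu(\xi)\phi(\xi,t)\,d\xi$ reproduces $\partial_t^{\alpha,\eta}u(t)$, where $\phi$ solves the auxiliary diffusion equation
\begin{equation*}
\partial_t\phi(\xi,t)+(\xi^2+\eta)\phi(\xi,t)=\mu(\xi)\,u_t(t),\qquad \phi(\xi,0)=0.
\end{equation*}
First I would treat $\xi$ as a parameter and solve this first-order linear ordinary differential equation in $t$ by the integrating-factor (variation of constants) formula, obtaining
\begin{equation*}
\phi(\xi,t)=\mu(\xi)\int_0^t e^{-(\xi^2+\eta)(t-s)}u_s(s)\,ds.
\end{equation*}

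Next I would substitute this expression into the output functional and interchange the order of the $\xi$- and $s$-integrations, which reduces the claim to
\begin{equation*}
\varrho\int_0^t u_s(s)\left(\int_{-\infty}^{+\infty}\mu(\xi)^2\, e^{-(\xi^2+\eta)(t-s)}\,d\xi\right)ds.
\end{equation*}
Since $\mu(\xi)^2=|\xi|^{2\alpha-1}$, the computational heart is the inner integral. The substitution $w=\xi^2(t-s)$ converts it into an Euler gamma integral and yields
\begin{equation*}
\int_{-\infty}^{+\infty}|\xi|^{2\alpha-1}e^{-(\xi^2+\eta)(t-s)}\,d\xi=\Gamma(\alpha)\,(t-s)^{-\alpha}e^{-\eta(t-s)}.
\end{equation*}
Inserting this back collapses the output to $\varrho\,\Gamma(\alpha)\int_0^t(t-s)^{-\alpha}e^{-\eta(t-s)}u_s(s)\,ds$, which is exactly $\partial_t^{\alpha,\eta}u(t)=I^{1-\alpha,\eta}u'(t)$ provided $\varrho\,\Gamma(\alpha)=1/\Gamma(1-\alpha)$. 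The final step fixes the constant: Euler's reflection formula $\Gamma(\alpha)\Gamma(1-\alpha)=\pi/\sin(\alpha\pi)$ forces $\varrho=\pi^{-1}\sin(\alpha\pi)$, matching the stated value and closing the argument.

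I expect the main obstacle to be the rigorous justification of the improper $\xi$-integral and the Fubini interchange rather than any single algebraic step. The weight $|\xi|^{2\alpha-1}$ is singular at the origin when $\alpha<\tfrac12$ and unbounded at infinity; integrability near $\xi=0$ is saved by $2\alpha-1>-1$ (because $\alpha>0$), while the Gaussian factor $e^{-\xi^2(t-s)}$ tames the tail for every $t>s$, so absolute convergence holds and Fubini applies. The remaining $s$-integral carries only the integrable singularity $(t-s)^{-\alpha}$ with $\alpha<1$. Once these integrability points are dispatched, the gamma-function evaluation and the reflection identity make the identification of $\varrho$ automatic.
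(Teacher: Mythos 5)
Your verification is correct, and it supplies a proof for a step the paper itself leaves to the literature: this theorem is quoted from Mbodje (reference \cite{14}) and never proved in the paper. The route taken in that reference, and the one the paper is implicitly set up for, is a frequency-domain argument: Laplace-transform the diffusion equation $\partial_t\phi+(\xi^2+\eta)\phi=\mu(\xi)U$ to get $\hat{\phi}(\xi,\lambda)=\mu(\xi)\hat{U}(\lambda)/(\lambda+\eta+\xi^2)$, then invoke exactly the resolvent identity the paper records as a separate lemma (cited from \cite{3}),
\begin{equation*}
\int_{-\infty}^{+\infty}\frac{\mu^{2}(\xi)}{\eta+\lambda+\xi^{2}}\,d\xi=\frac{\pi}{\sin(\alpha\pi)}(\eta+\lambda)^{\alpha-1},
\end{equation*}
so that $\hat{O}(\lambda)=\varrho\,A_{\lambda}\hat{U}(\lambda)=(\eta+\lambda)^{\alpha-1}\hat{U}(\lambda)$, which is the transform of $I^{1-\alpha,\eta}U$. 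Your argument stays entirely in the time domain: variation of constants, Fubini, the substitution $w=\xi^{2}(t-s)$ collapsing the inner integral to $\Gamma(\alpha)(t-s)^{-\alpha}e^{-\eta(t-s)}$, and Euler's reflection formula to pin down $\varrho$. Both are sound, and your computation of the constant checks out ($\varrho\,\Gamma(\alpha)=\sin(\alpha\pi)\Gamma(\alpha)/\pi=1/\Gamma(1-\alpha)$). What yours buys is elementarity and self-containedness: no Laplace inversion or injectivity-of-the-transform argument is needed, only absolute convergence, which you correctly secure from $2\alpha-1>-1$ near $\xi=0$ and the Gaussian factor at infinity, plus integrability of $(t-s)^{-\alpha}$ since $\alpha<1$. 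What the frequency-domain route buys is compatibility with the semigroup and stability machinery used throughout this literature, which is presumably why the paper keeps the resolvent lemma on hand. The only hypothesis you should state explicitly is mild regularity of the input, e.g. $u_t\in L^{1}(0,T)$, so that Fubini applies; with that, your proof is complete.
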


\begin{equation}
\mu (\xi )=|\xi |^{\frac{(2\alpha -1)}{2}},\quad 0<\alpha <1, \ \xi \in
%TCIMACRO{\U{211d} }%
%BeginExpansion
\mathbb{R}
%EndExpansion
.  \label{2.16}
\end{equation}%
Then, we can obtain
\begin{equation}
O=I^{1-\alpha ,\eta }U.  \label{2.20}
\end{equation}%
which is a relation between $U$ the \textquotedblright
input\textquotedblright\ of the system
\begin{equation}
\partial _{t}\phi (\xi ,t)+(\xi ^{2}+\eta )\phi (\xi ,t)-U(L,t)\mu (\xi )=0,
\ \ t>0,\eta \geq 0, \text{ }\xi \in
%TCIMACRO{\U{211d} }%
%BeginExpansion
\mathbb{R}
%EndExpansion
\label{2.17}
\end{equation}
and the \textquotedblright output\textquotedblright\ $O$ given by
\begin{equation}
O(t)=\varrho \int_{-\infty }^{+\infty }\phi (\xi ,t)\mu (\xi )d\xi ,\text{ }%
\xi \in
%TCIMACRO{\U{211d} }%
%BeginExpansion
\mathbb{R}
%EndExpansion
,\ t>0.  \label{2.19}
\end{equation}

Now using $(1.2)$ and Theorem 1, the augmented system related to our system $%
(1.1)$ may be given by
\begin{equation}
\left\{
\begin{array}{ll}
u_{tt}-\Delta u+au_{t}+\int_{0}^{t}g\left( t-s\right) \Delta u\left(
s\right) ds=|u|^{p-2}u, & x\in \Omega ,t>0, \\
\partial _{t}\phi (\xi ,t)+(\xi ^{2}+\eta )\phi (\xi ,t)-u_{t}(x,t)\mu (\xi
)=0, & x\in \Gamma _{0},\xi \in
%TCIMACRO{\U{211d} }%
%BeginExpansion
\mathbb{R}
%EndExpansion
,t>0, \\
\frac{\partial u}{\partial \nu }=-b_{1}\int_{-\infty }^{+\infty }\phi (\xi
,t)\mu (\xi )d\xi , & x\in \Gamma _{0},\xi \in
%TCIMACRO{\U{211d} }%
%BeginExpansion
\mathbb{R}
%EndExpansion
,t>0, \\
u=0, & x\in \Gamma _{1},t>0, \\
u(x,0)=u_{0}(x),\quad u_{t}(x,0)=u_{1}(x), & x\in \Omega , \\
\phi (\xi ,0)=0, & \xi \in
%TCIMACRO{\U{211d} }%
%BeginExpansion
\mathbb{R}
%EndExpansion
,%
\end{array}%
\right.  \label{2.21}
\end{equation}%
where $b_{1}= b \varrho $.

\begin{lemma}
(\cite{3}, p. 3, Lemma 2.1)\newline
For all $\lambda \in D_{\eta }= \left\{ \lambda \in \mathbf{%
%TCIMACRO{\U{2102} }%
%BeginExpansion
\mathbb{C}
%EndExpansion
}:\Im m\lambda \neq 0\right\} \cup \left\{ \lambda \in \mathbf{%
%TCIMACRO{\U{2102} }%
%BeginExpansion
\mathbb{C}
%EndExpansion
}:\Re e\lambda +\eta >0\right\}$, we have
\begin{equation*}
A_{\lambda }=\int_{-\infty }^{+\infty }\frac{\mu ^{2}(\xi )}{\eta + \lambda
+\xi ^{2}}d\xi =\frac{\pi }{\sin {(\alpha \pi )}}(\eta +\lambda)^{\alpha -1}.
\end{equation*}
\end{lemma}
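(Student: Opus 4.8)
The plan is to evaluate the integral directly, reducing it to a standard Beta-function integral for real parameters and then extending to complex ones by analytic continuation. Since $\mu^{2}(\xi)=|\xi|^{2\alpha-1}$ is even in $\xi$, I would first write
\begin{equation*}
A_{\lambda}=\int_{-\infty}^{+\infty}\frac{|\xi|^{2\alpha-1}}{(\eta+\lambda)+\xi^{2}}\,d\xi=2\int_{0}^{+\infty}\frac{\xi^{2\alpha-1}}{(\eta+\lambda)+\xi^{2}}\,d\xi.
\end{equation*}
Setting $s=\eta+\lambda$ and substituting $u=\xi^{2}$ (so that $d\xi=\tfrac{1}{2}u^{-1/2}\,du$ and $\xi^{2\alpha-1}=u^{\alpha-1/2}$) collapses this to
\begin{equation*}
A_{\lambda}=\int_{0}^{+\infty}\frac{u^{\alpha-1}}{s+u}\,du.
\end{equation*}

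Next I would verify that the definition of $D_{\eta}$ is precisely what this integral requires. If $\Im m\,\lambda\neq 0$ then $\Im m\,s=\Im m\,\lambda\neq 0$, while if $\Re e\,\lambda+\eta>0$ then $\Re e\,s>0$; in either case $s\notin(-\infty,0]$, so the denominator never vanishes along the contour. Convergence then follows because the integrand behaves like $u^{\alpha-1}$ near $0$ (integrable since $\alpha>0$) and like $u^{\alpha-2}$ near $+\infty$ (integrable since $\alpha<1$), the two conditions together being exactly $0<\alpha<1$.

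The core computation is the classical identity
\begin{equation*}
\int_{0}^{+\infty}\frac{u^{\alpha-1}}{s+u}\,du=\frac{\pi}{\sin(\alpha\pi)}\,s^{\alpha-1},\qquad 0<\alpha<1.
\end{equation*}
I would establish it first for real $s>0$ by the scaling $u=sv$, which pulls out a factor $s^{\alpha-1}$ and leaves $\int_{0}^{\infty}\frac{v^{\alpha-1}}{1+v}\,dv=B(\alpha,1-\alpha)=\Gamma(\alpha)\Gamma(1-\alpha)$; Euler's reflection formula $\Gamma(\alpha)\Gamma(1-\alpha)=\pi/\sin(\alpha\pi)$ then gives the stated value. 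To pass to complex $s\in\mathbb{C}\setminus(-\infty,0]$, I would observe that both sides are holomorphic in $s$ on this cut plane—the left-hand side by differentiation under the integral sign, justified by the uniform bounds above on compact subsets, and the right-hand side via the principal branch of $s^{\alpha-1}$—and that they coincide on the positive real axis; the identity theorem then forces agreement throughout $D_{\eta}$. Restoring $s=\eta+\lambda$ yields the claim.

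The main obstacle I anticipate is the passage to complex $s$: one must fix the branch of $(\eta+\lambda)^{\alpha-1}$ consistently with the cut plane $\mathbb{C}\setminus(-\infty,0]$ and justify holomorphy of the parameter integral by exhibiting a dominating function valid on compacta, after which the identity theorem completes the argument. The real-variable Beta-integral evaluation itself is entirely routine.
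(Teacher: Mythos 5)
Your proof is correct. Note first that the paper itself offers no proof of this lemma: it is imported verbatim from the cited reference (\cite{3}, Lemma 2.1), so there is no internal argument to compare against; your write-up supplies exactly the details that the citation leaves out. The chain of reductions is sound: $\mu^{2}(\xi)=|\xi|^{2\alpha-1}$ by $(2.16)$, evenness halves the line integral, the substitution $u=\xi^{2}$ produces $\int_{0}^{\infty}u^{\alpha-1}(s+u)^{-1}\,du$ with $s=\eta+\lambda$, the scaling $u=sv$ for real $s>0$ reduces this to the Beta integral $B(\alpha,1-\alpha)=\Gamma(\alpha)\Gamma(1-\alpha)=\pi/\sin(\alpha\pi)$, and the extension to complex $s$ follows from the identity theorem, since under $s=\eta+\lambda$ the set $D_{\eta}$ maps precisely onto the connected cut plane $\mathbb{C}\setminus(-\infty,0]$, where both sides are holomorphic (your uniform bound $|s+u|\geq\operatorname{dist}(K,(-\infty,0])>0$ on compacta $K$, together with the $u^{\alpha-2}$ decay at infinity, justifies differentiation under the integral). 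Your convergence check at $0$ and $\infty$ correctly identifies $0<\alpha<1$ as the exact condition needed, and your attention to the principal branch of $(\eta+\lambda)^{\alpha-1}$ addresses the one genuinely delicate point in the statement.
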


\begin{theorem}
(Local existence and Uniqueness) \newline
Assume (2.4) holds. Then for all $(u_{0},u_{1},\phi _{0})\in H_{\Gamma
_{0}}^{1}(\Omega )\times L^{2}(\Omega )\times L^{2}(-\infty ,+\infty )$,
there exists some $T$ small enough such that problem ($\ref{2.21}$) admits a
unique solution
\begin{equation}
\left\{
\begin{array}{ll}
u\in C([0,T),H_{\Gamma _{0}}^{1}(\Omega )), &  \\
u_{t}\in C([0,T),L^{2}(\Omega )), &  \\
\phi \in C([0,T),L^{2}(-\infty ,+\infty ). &
\end{array}%
\right.  \label{2.22}
\end{equation}
\end{theorem}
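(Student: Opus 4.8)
The plan is to establish local existence and uniqueness for the augmented system $(\ref{2.21})$ by recasting it as an abstract Cauchy problem $V' = \mathcal{A}V + \mathcal{F}(V)$ on an appropriate Hilbert space and invoking semigroup theory combined with a fixed-point argument for the nonlinear term. First I would introduce the state variable $V = (u, u_t, \phi)^T$ living in the energy space $\mathcal{H} = H^1_{\Gamma_0}(\Omega) \times L^2(\Omega) \times L^2(-\infty, +\infty)$, equipped with an inner product that makes the natural energy a norm; the off-diagonal weighting coming from the boundary coupling $\phi$ must be chosen carefully so that the dissipative structure is visible. The linear part $\mathcal{A}$ collects $-\Delta u$ (with the fractional boundary condition $\frac{\partial u}{\partial \nu} = -b_1 \int \phi\, \mu\, d\xi$ encoded in its domain), the damping $-a u_t$, the transport--decay operator $-(\xi^2+\eta)\phi$, and the input coupling $u_t \mu(\xi)$; the nonlinearity $\mathcal{F}(V) = (0, |u|^{p-2}u - \int_0^t g(t-s)\Delta u(s)\,ds, 0)^T$ gathers the source term and the memory convolution.

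The key steps, in order, are: (i) verify that $\mathcal{A}$ is dissipative, i.e. $\langle \mathcal{A}V, V\rangle_{\mathcal{H}} \le 0$, where the crucial computation uses Lemma~7 (the formula for $A_\lambda$) so that the boundary terms combine with the $\int (\xi^2+\eta)|\phi|^2 d\xi$ contribution to yield a sign-definite quantity; (ii) show that $\lambda I - \mathcal{A}$ is surjective for some $\lambda > 0$ (the range condition), which reduces to solving a boundary-value problem for $u$ after eliminating $\phi$ via the resolvent of the $\xi$-equation, again invoking Lemma~7 to identify the resulting nonlocal boundary operator and applying Lax--Milgram; (iii) conclude by the Lumer--Phillips theorem that $\mathcal{A}$ generates a $C_0$-semigroup of contractions on $\mathcal{H}$. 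For the full nonlinear problem I would then treat $\mathcal{F}$ as a perturbation: using the Sobolev embedding of Lemma~1 and the trace embedding of Lemma~2, together with the assumption $(\ref{2.4})$ on $p$, one shows $\mathcal{F}$ is locally Lipschitz on $\mathcal{H}$ (the memory term is handled by estimating the convolution in $L^2$ using $(G_1)$). A standard contraction-mapping argument on $C([0,T], \mathcal{H})$ for $T$ small then delivers the unique mild solution with the regularity stated in $(\ref{2.22})$.

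I expect the main obstacle to be the range/surjectivity step (ii) rather than dissipativity. Eliminating the infinite-dimensional variable $\phi$ from the resolvent equation produces a nonlocal, fractional-type boundary term on $\Gamma_0$, and one must verify that the associated sesquilinear form on $H^1_{\Gamma_0}(\Omega)$ remains coercive once this boundary contribution is included; this is precisely where Lemma~7 is indispensable, since it evaluates the singular integral $\int_{-\infty}^{+\infty} \frac{\mu^2(\xi)}{\eta + \lambda + \xi^2}\, d\xi$ in closed form and guarantees it has the correct sign and growth to preserve coercivity. A secondary technical point is checking the local Lipschitz estimate for $|u|^{p-2}u$ uniformly on bounded sets of $\mathcal{H}$, which requires the subcriticality condition $(\ref{2.4})$; if $p$ were too large the embedding would fail and the fixed-point scheme would break down. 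Once coercivity and the Lipschitz bound are secured, the remaining assembly into a local well-posedness statement is routine.
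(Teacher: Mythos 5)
First, a point of comparison: the paper offers no proof of this theorem at all. It is quoted as a known local well-posedness result, imported from the fractional-boundary-damping literature (essentially \cite{R20}, built on the augmented-system framework of \cite{14} and \cite{3}), so your proposal cannot be measured against a proof that is not there; it must stand on its own. Your overall architecture --- state $V=(u,u_t,\phi)$ on $\mathcal{H}=H^1_{\Gamma_0}(\Omega)\times L^2(\Omega)\times L^2(-\infty,+\infty)$, Lumer--Phillips for the linear part with Lemma 7 doing the real work in the surjectivity step, then a contraction argument for the source term, with condition (2.4) guaranteeing that $u\mapsto|u|^{p-2}u$ is locally Lipschitz from $H^1$ into $L^2$ --- is indeed the standard and correct route for the \emph{memoryless} version of this problem. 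One small inaccuracy: dissipativity of $\mathcal{A}$ does not use Lemma 7; with the $b_1$-weighted inner product the cross terms between $u_t$ and $\phi$ cancel identically, leaving $\langle\mathcal{A}V,V\rangle_{\mathcal{H}}=-a\Arrowvert u_t\Arrowvert_2^2-b_1\int_{\Gamma_0}\int_{-\infty}^{+\infty}(\xi^2+\eta)|\phi|^2\,d\xi\,d\rho\leq 0$ directly; Lemma 7 is needed only in the resolvent step, as you correctly anticipate.

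The genuine gap is your treatment of the memory term. You place $-\int_0^t g(t-s)\Delta u(s)\,ds$ inside the perturbation $\mathcal{F}(V)$ and claim it can be ``estimated in $L^2$ using $(G_1)$.'' It cannot: for $u(s)$ merely in $H^1_{\Gamma_0}(\Omega)$, $\Delta u(s)$ lives only in $H^{-1}(\Omega)$, so the convolution takes values in $H^{-1}(\Omega)$ and $\mathcal{F}$ does not map $C([0,T];\mathcal{H})$ into anything contained in $L^1(0,T;\mathcal{H})$; the Duhamel/mild-solution formulation on which your contraction rests is not even well defined. (The history dependence of $\mathcal{F}$ --- it is a functional of $\{u(s)\}_{s\leq t}$ rather than of $V(t)$ --- is by itself harmless for a fixed point on $C([0,T];\mathcal{H})$, but the loss of two derivatives is fatal.) The two standard repairs are: (a) the Dafermos history formulation, introducing $\eta^t(x,s)=u(x,t)-u(x,t-s)$ as an extra state variable in the weighted space $L^2_g\bigl(\mathbb{R}_+;H^1_{\Gamma_0}(\Omega)\bigr)$, which absorbs the memory into the \emph{linear} generator; the principal part then reads $-l\Delta u-\int_0^\infty g(s)\Delta\eta^t(s)\,ds$, and the coercivity needed for Lumer--Phillips/Lax--Milgram is exactly where $(G_1)$, i.e. $l=1-\int_0^\infty g(s)\,ds>0$, enters --- an assumption your scheme never actually invokes, which is itself a warning sign; or (b) a Faedo--Galerkin argument, in which the memory term only ever appears tested against gradients, $\int_0^t g(t-s)\int_\Omega\nabla u(s)\cdot\nabla v\,dx\,ds$, and hence needs no more than $\nabla u\in L^2$. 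Either repair is compatible with the rest of your outline, but as written the fixed-point step fails.
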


\section{Global existence}

Before proving the global existence for problem (\ref{2.21}), let us
introduce the functionals:
\begin{equation*}
I(t)=\left( 1-\int_{0}^{t}g(s)ds\right) \Arrowvert\nabla u\Arrowvert%
_{2}^{2}+\left( g\circ \nabla u\right) \left( t\right) -\Arrowvert u%
\Arrowvert_{p}^{p}
\end{equation*}%
and
\begin{equation*}
J(t)=\frac{1}{2}\left[ \left( 1-\int_{0}^{t}g(s)ds\right) \Arrowvert\nabla u%
\Arrowvert_{2}^{2}+\left( g\circ \nabla u\right) \left( t\right) \right] -%
\frac{1}{p}\Arrowvert u\Arrowvert_{p}^{p}.
\end{equation*}%
The energy functional $E$ associated to system (\ref{2.21}) is given as
follows:
\begin{equation}
E(t)=\frac{1}{2}\Arrowvert u_{t}\Arrowvert_{2}^{2}+\frac{1}{2}\left(
1-\int_{0}^{t}g(s)ds\right) \Arrowvert\nabla u\Arrowvert_{2}^{2}+\frac{1}{2}%
\left( g\circ \nabla u\right) \left( t\right) -\frac{1}{p}\Arrowvert u%
\Arrowvert_{p}^{p}+\frac{b_{1}}{2}\int_{\Gamma _{0}}\int_{-\infty }^{+\infty
}|\phi (\xi ,t)|^{2}d\xi d\rho .  \label{3.1}
\end{equation}

\begin{lemma}
If $(u,\phi )$ is a regular solution to (\ref{2.21}), then the energy
functional given in $(3.1)$ verifies
\begin{equation}
\frac{d}{dt}E(t)=-a\Arrowvert u_{t}\Arrowvert_{2}^{2}-\frac{1}{2}g\left(
t\right) \Arrowvert\nabla u\Arrowvert_{2}^{2}+\frac{1}{2}\left( g^{\prime
}\circ \nabla u\right) \left( t\right) -b_{1}\int_{\Gamma _{0}}\int_{-\infty
}^{+\infty }(\xi ^{2}+\eta )|\phi (\xi ,t)|^{2}d\xi d\rho \leq 0.
\label{3.2}
\end{equation}
\end{lemma}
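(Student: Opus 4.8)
The plan is to reconstruct $\frac{d}{dt}E(t)$ by the standard multiplier method: test the first equation of (\ref{2.21}) with $u_t$ over $\Omega$, test the second ($\phi$-)equation with $b_1\phi$ over $\Gamma_0\times\mathbb{R}$, and add the two identities so that the boundary coupling created by the third equation cancels. First I would multiply the wave equation by $u_t$ and integrate over $\Omega$. The inertial and source terms produce $\frac{d}{dt}\big(\frac12\Vert u_t\Vert_2^2-\frac1p\Vert u\Vert_p^p\big)$, the damping produces $a\Vert u_t\Vert_2^2$, and Green's formula gives
\[
-\int_\Omega \Delta u\,u_t\,dx=\frac12\frac{d}{dt}\Vert\nabla u\Vert_2^2-\int_{\partial\Omega}\frac{\partial u}{\partial\nu}\,u_t\,dS .
\]
Since $u=0$ on $\Gamma_1$ (hence $u_t=0$ there) and $\partial_\nu u=-b_1\int_{\mathbb{R}}\phi\,\mu\,d\xi$ on $\Gamma_0$, this boundary integral reduces to $-b_1\int_{\Gamma_0}u_t\big(\int_{\mathbb{R}}\phi\,\mu\,d\xi\big)\,dS$ with the sign carried into the eventual identity.

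Next I would treat the memory convolution, which is the technical heart of the computation. After one spatial integration by parts and writing $\nabla u(s)=\nabla u(t)-(\nabla u(t)-\nabla u(s))$, I would use the differentiation identity
\[
\frac{d}{dt}(g\circ\nabla u)(t)=(g'\circ\nabla u)(t)+2\int_0^t g(t-s)\int_\Omega \nabla u_t\cdot(\nabla u(t)-\nabla u(s))\,dx\,ds ,
\]
which converts $\int_0^t g(t-s)\int_\Omega\nabla u(s)\cdot\nabla u_t\,dx\,ds$ into
\[
\tfrac12\Big(\textstyle\int_0^t g(s)\,ds\Big)\frac{d}{dt}\Vert\nabla u\Vert_2^2-\tfrac12\frac{d}{dt}(g\circ\nabla u)(t)+\tfrac12(g'\circ\nabla u)(t).
\]
Combining this with the $\frac12\frac{d}{dt}\Vert\nabla u\Vert_2^2$ coming from $-\Delta u$ reconstructs the bracket $\frac12\big(1-\int_0^t g\big)\Vert\nabla u\Vert_2^2+\frac12(g\circ\nabla u)$, while differentiating the time-dependent coefficient $1-\int_0^t g(s)\,ds$ (whose derivative is $-g(t)$) produces the leftover term $-\frac12 g(t)\Vert\nabla u\Vert_2^2$, and the convolution identity leaves $+\frac12(g'\circ\nabla u)(t)$; these are exactly the extra terms of (\ref{3.2}).

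Finally I would multiply the $\phi$-equation by $b_1\phi$ and integrate over $\Gamma_0\times\mathbb{R}$, obtaining
\[
\frac{b_1}{2}\frac{d}{dt}\int_{\Gamma_0}\!\!\int_{\mathbb{R}}|\phi|^2\,d\xi\,dS+b_1\int_{\Gamma_0}\!\!\int_{\mathbb{R}}(\xi^2+\eta)|\phi|^2\,d\xi\,dS-b_1\int_{\Gamma_0}u_t\Big(\int_{\mathbb{R}}\phi\,\mu\,d\xi\Big)dS=0 .
\]
The last term is precisely the opposite of the boundary contribution found above, so adding the two identities cancels the coupling and isolates (\ref{3.2}). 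The sign conclusion is then immediate: $a\Vert u_t\Vert_2^2\ge 0$, $\frac12 g(t)\Vert\nabla u\Vert_2^2\ge 0$ by $(G_1)$, $\frac12(g'\circ\nabla u)\le 0$ since $g'\le 0$ by $(G_2)$, and the fractional term is nonnegative because $\xi^2+\eta\ge 0$; hence $E'(t)\le 0$. The step I expect to be the main obstacle is the bookkeeping in the convolution term—matching every sign so that only $-\frac12 g(t)\Vert\nabla u\Vert_2^2+\frac12(g'\circ\nabla u)$ survives—and, more delicately, the fact that the spatial integration by parts of the memory term also generates a boundary flux $\int_0^t g(t-s)\int_{\Gamma_0}\partial_\nu u(s)\,u_t(t)\,dS\,ds$ on $\Gamma_0$; using $\partial_\nu u(s)=-b_1\int_{\mathbb{R}}\phi(\xi,s)\mu\,d\xi$ one must verify that this extra convolution-type boundary term does not contribute to the stated identity (\ref{3.2}), so care is needed to confirm it is absent for the regular solutions considered or is otherwise absorbed.
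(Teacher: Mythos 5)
Your proposal is the paper's own proof: test the first equation of (\ref{2.21}) with $u_t$ over $\Omega$, test the second with $b_1\phi$ over $\Gamma_0\times(-\infty,+\infty)$, and add the two identities so that the coupling terms $\pm\, b_1\int_{\Gamma_0}u_t\int_{-\infty}^{+\infty}\mu(\xi)\phi(\xi,t)\,d\xi\,d\rho$ cancel; this is exactly how the paper combines (\ref{3.3}) and (\ref{3.4}). Your treatment of the convolution term is in fact more careful than the paper's: the differentiation identity you invoke leaves precisely $+\frac12 g(t)\Vert\nabla u\Vert_2^2-\frac12\left(g'\circ\nabla u\right)(t)$ on the left-hand side, which is what makes (\ref{3.2}) come out, whereas the paper's intermediate display (\ref{3.3}) omits these two terms altogether (they reappear, correctly, in (\ref{3.2})), so your bookkeeping repairs an inconsistency in the paper's write-up rather than deviating from it.

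The obstacle you flag at the end is genuine, and you should know that the paper does not address it. Integrating the memory term by parts produces the boundary flux
\begin{equation*}
\int_0^t g(t-s)\int_{\Gamma_0}\frac{\partial u}{\partial\nu}(s)\,u_t(x,t)\,d\rho\,ds
=-\,b_1\int_0^t g(t-s)\int_{\Gamma_0}u_t(x,t)\int_{-\infty}^{+\infty}\mu(\xi)\phi(\xi,s)\,d\xi\,d\rho\,ds,
\end{equation*}
a convolution-in-time term that is not sign-definite and is not cancelled by anything produced by the $\phi$-equation. The paper's proof passes directly to (\ref{3.3}) carrying only the instantaneous flux $b_1\int_{\Gamma_0}u_t\int_{-\infty}^{+\infty}\mu(\xi)\phi(\xi,t)\,d\xi\,d\rho$, i.e.\ it silently discards this term. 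The identity (\ref{3.2}) therefore holds as stated only if one reinterprets the boundary condition in (\ref{1.1}) as prescribing the effective viscoelastic flux $\frac{\partial u}{\partial\nu}-\int_0^t g(t-s)\frac{\partial u}{\partial\nu}(s)\,ds=-b\,\partial_t^{\alpha,\eta}u$ on $\Gamma_0$ (a common convention in this literature), or otherwise justifies dropping the convolution flux. So your closing caveat does not expose a defect of your own argument; it identifies a gap in the paper's.
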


\begin{proof}
Multiplying by $u_{t}$ in the first equation from (\ref{2.21}), using
integration by parts over $\Omega $, we get
\begin{eqnarray*}
&&\frac{1}{2}\Arrowvert u_{t}\Arrowvert_{2}^{2}-\int_{\Omega }\Delta
uu_{t}dx+a\Arrowvert u_{t}\Arrowvert_{2}^{2}+\frac{1}{2}\left(
1-\int_{0}^{t}g(s)ds\right) \Arrowvert\nabla u\Arrowvert_{2}^{2}+\frac{1}{2}%
\left( g\circ \nabla u\right) \left( t\right) \\
&=& \int_{\Omega }|u|^{p-2}u\text{ }u_{t}dx.
\end{eqnarray*}%
Therefore
\begin{equation}
\begin{split}
\frac{d}{dt}\left[ \frac{1}{2}\Arrowvert u_{t}\Arrowvert_{2}^{2}\right. &
\left. +\frac{1}{2}\left( 1-\int_{0}^{t}g(s)ds\right) \Arrowvert\nabla u%
\Arrowvert_{2}^{2}+\frac{1}{2}\left( g\circ \nabla u\right) \left( t\right) -%
\frac{1}{p}\Arrowvert u\Arrowvert_{p}^{p}\right] \\
+a\Arrowvert u_{t}\Arrowvert_{2}^{2}& +b_{1}\int_{\Gamma
_{0}}u_{t}(x,t)\int_{-\infty }^{+\infty }\mu (\xi )\phi (\xi ,t)d\xi d\rho
=0.
\end{split}
\label{3.3}
\end{equation}%
Multiplying by $b_{1}\phi $ in the second equation from (\ref{2.21}), and
integrating over $\Gamma _{0}\times (-\infty ,+\infty )$, we get
\begin{equation}
\begin{split}
\frac{b_{1}}{2}\frac{d}{dt}\int_{\Gamma _{0}}\int_{-\infty }^{+\infty }&
|\phi (\xi ,t)|^{2}d\xi d\rho +b_{1}\int_{\Gamma _{0}}\int_{-\infty
}^{+\infty }(\xi ^{2}+\eta )|\phi (\xi ,t)|^{2}d\xi d\rho \\
& -b_{1}\int_{\Gamma _{0}}u_{t}(x,t)\int_{-\infty }^{+\infty }\mu (\xi )\phi
(\xi ,t)d\xi d\rho =0.
\end{split}
\label{3.4}
\end{equation}%
From $(3.1)$, $(3.3)$ and $(3.4)$ we obtain
\begin{equation*}
\frac{d}{dt}E(t)=-a\Arrowvert u_{t}\Arrowvert_{2}^{2}-\frac{1}{2}g\left(
t\right) \Arrowvert\nabla u\Arrowvert_{2}^{2}+\frac{1}{2}\left( g^{\prime
}\circ \nabla u\right) \left( t\right) -b_{1}\int_{\Gamma _{0}}\int_{-\infty
}^{+\infty }(\xi ^{2}+\eta )|\phi (\xi ,t)|^{2}d\xi d\rho \leq 0.
\end{equation*}
\end{proof}

\begin{lemma}
Assuming $(2.4)$ holds and that for all $(u_{0},u_{1},\phi _{0})\in
H_{\Gamma _{0}}^{1}(\Omega )\times L^{2}(\Omega )\times L^{2}(-\infty
,+\infty )$, verify
\begin{equation}
\left\{
\begin{array}{l}
\beta =C_{\ast }^{p}\left( \frac{2p}{p-2}E(0)\right) ^{\frac{p-2}{2}}<1 \\
I(u_{0})>0,%
\end{array}%
\right.  \label{3.5}
\end{equation}%
Then, $u(t)\in \aleph ,\quad \forall t\in \lbrack 0,T]$.
\end{lemma}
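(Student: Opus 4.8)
The plan is to prove by a standard continuity (connectedness) argument that $u(t)$ remains in the set $\aleph$ for all $t\in[0,T]$, i.e.\ that $I(u(t))>0$ persists as long as it holds initially. Since $I(u_0)>0$ by hypothesis \eqref{3.5}, and $t\mapsto I(u(t))$ is continuous (by the regularity in \eqref{2.22}), there is a maximal subinterval $[0,T_m)\subseteq[0,T]$ on which $I(u(t))>0$. The goal is to show $T_m=T$, which I will do by deriving a uniform bound on $\Vert\nabla u\Vert_2$ that strictly separates $I(u(t))$ from zero.

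\textbf{Key steps.} First I would record the algebraic relation between $J$, $I$, and the energy. A direct computation gives
\begin{equation*}
J(t)=\frac{p-2}{2p}\left[\left(1-\int_0^t g(s)\,ds\right)\Vert\nabla u\Vert_2^2+(g\circ\nabla u)(t)\right]+\frac{1}{p}I(t),
\end{equation*}
so that on the interval where $I(t)>0$ we have $J(t)\ge \frac{p-2}{2p}\,\theta(t)$, where I abbreviate $\theta(t)=\bigl(1-\int_0^t g\bigr)\Vert\nabla u\Vert_2^2+(g\circ\nabla u)(t)$. Next, from the definition \eqref{3.1} of $E$ and the fact that $E(t)\le E(0)$ (Lemma on energy decay, \eqref{3.2}), I would deduce
\begin{equation*}
\frac{p-2}{2p}\,\theta(t)\le J(t)\le E(t)\le E(0),
\end{equation*}
which yields the crucial uniform bound $\theta(t)\le \frac{2p}{p-2}E(0)$, and in particular $l\Vert\nabla u\Vert_2^2\le\theta(t)\le\frac{2p}{p-2}E(0)$ using \eqref{2.1}.

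\textbf{Closing the argument.} The main obstacle is controlling the nonlinear term $\Vert u\Vert_p^p$ against $\theta(t)$. Here I would combine the Sobolev--Poincar\'e inequality (Lemma~1) with the bound just obtained: from $\Vert u\Vert_p\le C_*\Vert\nabla u\Vert_2$ one gets
\begin{equation*}
\Vert u\Vert_p^p\le C_*^p\Vert\nabla u\Vert_2^{p-2}\,\Vert\nabla u\Vert_2^2\le C_*^p\left(\frac{\theta(t)}{l}\right)^{\frac{p-2}{2}}\Vert\nabla u\Vert_2^2,
\end{equation*}
and after inserting the energy bound on $\theta(t)$ (and choosing the constants so they match the definition of $\beta$ in \eqref{3.5}) this becomes $\Vert u\Vert_p^p\le\beta\,\theta(t)<\theta(t)$, since $\beta<1$. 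Consequently $I(t)=\theta(t)-\Vert u\Vert_p^p\ge(1-\beta)\theta(t)>0$ strictly, as long as $\theta(t)>0$. This strict lower bound prevents $I(u(t))$ from reaching $0$ at $t=T_m$, contradicting the maximality of $T_m$ unless $T_m=T$; hence $I(u(t))>0$ on all of $[0,T]$, so $u(t)\in\aleph$ for every $t\in[0,T]$, completing the proof. The one point demanding care is the degenerate case where $\theta(t)$ could vanish — but then $u(t)\equiv 0$ on $\Gamma$-relevant norms forces $u(t)=0\in\aleph$ by definition, so the conclusion holds trivially there as well.
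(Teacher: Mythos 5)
Your proposal is correct and follows essentially the same route as the paper's own proof: the decomposition $J(t)=\frac{p-2}{2p}\,\theta(t)+\frac{1}{p}I(t)$ together with $J(t)\leq E(t)\leq E(0)$ to bound $\theta(t)\leq\frac{2p}{p-2}E(0)$, then the Sobolev--Poincar\'e inequality and $\beta<1$ to force strict positivity of $I(t)$, closed by a continuation argument extending the interval to all of $[0,T]$. If anything you are slightly more careful than the paper (you track the factor $1/l$ in $\Vert\nabla u\Vert_{2}^{2}\leq\theta(t)/l$, which the paper silently drops in its inequality $(3.7)$, and you treat the degenerate case $u=0$), but the ingredients and structure of the argument are the same.
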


\begin{proof}
As $I(u_{0})>0$, there exists $T^{\ast }\leq T$ such that
\begin{equation*}
I(u)\geq 0, \ \ \forall t\in \lbrack 0,T^{\ast }).
\end{equation*}
This leads to:
\begin{equation}
\begin{split}
\left( 1-\int_{0}^{t}g(s)ds\right) \Arrowvert\nabla u\Arrowvert%
_{2}^{2}+\left( g\circ \nabla u\right) \left( t\right) & \leq \frac{2p}{p-2}%
J(t),\quad \forall t\in \lbrack 0,T^{\ast }) \\
& \leq \frac{2p}{p-2}E(0).
\end{split}
\label{3.6}
\end{equation}%
Using the Poincare inequality, $\left( 2.1\right)$, $\left( 2.3\right)$, $%
\left(3.5\right)$ and $(3.6)$, we obtain
\begin{equation}
\begin{split}
\Arrowvert u\Arrowvert_{p}^{p}& \leq C_{\ast }^{p}\Arrowvert\nabla u%
\Arrowvert_{2}^{p} \\
& \leq C_{\ast }^{p}\left( \frac{2p}{p-2}E(0)\right) ^{\frac{p-2}{2}}%
\Arrowvert\nabla u\Arrowvert_{2}^{2}.
\end{split}
\label{3.7}
\end{equation}%
Thus
\begin{equation*}
\left( 1-\int_{0}^{t}g(s)ds\right) \Arrowvert\nabla u\Arrowvert%
_{2}^{2}+\left( g\circ \nabla u\right) \left( t\right) -\Arrowvert u %
\Arrowvert_{p}^{p}>0, \ \forall t\in \lbrack 0,T^{\ast }).
\end{equation*}
Consequently $u\in H,\forall t\in \lbrack 0,T^{\ast })$.\newline
Repeating the procedure, $T^{\ast }$ can be extended to $T$, and that makes
the proof of our global existence result within reach.
\end{proof}

\begin{theorem}
Assume $(2.4)$ holds. Then for all
\begin{equation*}
(u_{0},u_{1},\phi _{0})\in H_{\Gamma _{0}}^{1}(\Omega )\times L^{2}(\Omega
)\times L^{2}(-\infty ,+\infty )
\end{equation*}
verifying $(3.5)$, the solution of system (\ref{2.21}) is global and bounded.
\end{theorem}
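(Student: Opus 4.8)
The plan is to produce an a priori bound on the full energy norm of $(u,u_{t},\phi)$ that is \emph{uniform in time}, and then to combine this bound with the local existence theorem through a standard continuation argument. By the local existence and uniqueness theorem, for data in $H_{\Gamma_{0}}^{1}(\Omega)\times L^{2}(\Omega)\times L^{2}(-\infty,+\infty)$ there is a maximal time $T$ and a solution on $[0,T)$. The first thing I would record is that, under hypothesis $(3.5)$, the potential–well lemma preceding the theorem keeps the orbit inside $\aleph$, i.e.
\[
I(t)\geq 0,\qquad \forall\, t\in[0,T).
\]
This invariance is the structural fact that makes the whole estimate work, and it is exactly what Lemma 7 supplies.

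Next I would exploit the algebraic relation between $I$, $J$ and the quadratic part. Writing $Q(t)=\bigl(1-\int_{0}^{t}g(s)\,ds\bigr)\Arrowvert\nabla u\Arrowvert_{2}^{2}+(g\circ\nabla u)(t)$, the definitions of $I$ and $J$ give
\[
J(t)=\frac{p-2}{2p}\,Q(t)+\frac{1}{p}\,I(t)\;\geq\;\frac{p-2}{2p}\,Q(t),
\]
the inequality following from $I(t)\geq 0$. Since the kinetic, memory and boundary terms in $(3.1)$ are nonnegative we have $E(t)\geq J(t)$, while the energy identity $(3.2)$ gives the monotonicity $E(t)\leq E(0)$. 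Chaining these yields, exactly as in $(3.6)$,
\[
Q(t)\leq \frac{2p}{p-2}\,E(0),\qquad \forall\, t\in[0,T).
\]
From $(G_{1})$ in $(2.1)$ one has $1-\int_{0}^{t}g(s)\,ds\geq l>0$ and $(g\circ\nabla u)(t)\geq 0$, so this bounds $\Arrowvert\nabla u\Arrowvert_{2}^{2}\leq \frac{2p}{l(p-2)}E(0)$; simultaneously $E(t)\leq E(0)$ controls $\Arrowvert u_{t}\Arrowvert_{2}^{2}$, the memory term $(g\circ\nabla u)(t)$, and the boundary integral $\int_{\Gamma_{0}}\int_{-\infty}^{+\infty}|\phi(\xi,t)|^{2}\,d\xi\,d\rho$ by fixed multiples of $E(0)$.

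The crucial observation — and the only point requiring genuine care — is that every one of these bounds depends solely on $E(0)$ and the fixed constants $l,p$, and is therefore \emph{independent of the maximal time $T$}. Consequently the $H_{\Gamma_{0}}^{1}(\Omega)\times L^{2}(\Omega)\times L^{2}(-\infty,+\infty)$ norm of $(u,u_{t},\phi)$ cannot escape to infinity in finite time, which rules out finite–time blow-up of the norm and allows the local solution to be continued; repeating this extension shows $T=+\infty$, so the solution is global and uniformly bounded. I expect the main obstacle to be ensuring this uniformity rigorously, since it rests entirely on the invariance $I(t)\geq 0$: were the sign of $I$ to flip, the lower bound $J(t)\geq \frac{p-2}{2p}Q(t)$ would fail and the control of $\Arrowvert\nabla u\Arrowvert_{2}$ would be lost. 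Thus the argument hinges on first securing the potential-well property throughout $[0,T)$ and only then passing to the uniform estimate and the continuation step.
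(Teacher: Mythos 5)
Your proposal is correct and is essentially the paper's own argument: both rest on the potential-well invariance $I(t)\geq 0$ (Lemma 7) combined with the energy monotonicity $E(t)\leq E(0)$ from $(3.2)$, and the identity $E(t)=\frac{1}{2}\Arrowvert u_{t}\Arrowvert_{2}^{2}+\frac{p-2}{2p}Q(t)+\frac{1}{p}I(t)+\frac{b_{1}}{2}\int_{\Gamma_{0}}\int_{-\infty}^{+\infty}|\phi|^{2}\,d\xi\,d\rho$, which is exactly the paper's estimate $(3.8)$ leading to the uniform bound $\Arrowvert u_{t}\Arrowvert_{2}^{2}+\Arrowvert\nabla u\Arrowvert_{2}^{2}+b_{1}\int_{\Gamma_{0}}\int_{-\infty}^{+\infty}|\phi|^{2}\,d\xi\,d\rho\leq C_{1}E(0)$. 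The only differences are cosmetic improvements on your side: you retain the constant $l$ from $(G_{1})$ (which the paper silently drops) and you spell out the continuation argument that the paper leaves implicit.
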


\begin{proof}
From $(3.2)$, we get
\begin{equation}
\begin{split}
E(0)\geq E(t)& =\frac{1}{2}\Arrowvert u_{t}\Arrowvert_{2}^{2}+\frac{1}{2}%
\left( 1-\int_{0}^{t}g(s)ds\right) \Arrowvert\nabla u\Arrowvert_{2}^{2}+%
\frac{1}{2}\left( g\circ \nabla u\right) \left( t\right) -\frac{1}{p}%
\Arrowvert u\Arrowvert_{p}^{p} \\
& +\frac{b_{1}}{2}\int_{\Gamma _{0}}\int_{-\infty }^{+\infty }|\phi (\xi
,t)|^{2}d\xi d\rho \\
& \geq \frac{1}{2}\Arrowvert u_{t}\Arrowvert_{2}^{2}+\frac{p-2}{2p}\Arrowvert%
\nabla u\Arrowvert_{2}^{2}+\frac{1}{p}I(t)+\frac{b_{1}}{2}\int_{\Gamma
_{0}}\int_{-\infty }^{+\infty }|\phi (\xi ,t)|^{2}d\xi d\rho .
\end{split}
\label{3.8}
\end{equation}%
Or $I(t)>0$, therefrom
\begin{equation*}
\Arrowvert u_{t}\Arrowvert_{2}^{2}+\Arrowvert\nabla u\Arrowvert%
_{2}^{2}+b_{1}\int_{\Gamma _{0}}\int_{-\infty }^{+\infty }|\phi (\xi
,t)|^{2}d\xi d\rho \leq C_{1}E(0),
\end{equation*}%
where $C_{1}=\max \{\frac{2}{b_{1}},\frac{2p}{p-2}, 2 \}$.
\end{proof}

\section{ Decay of solutions}

To proceed for the energy decay result, we construct an appropriate Lyapunov
functional as follows:

\begin{equation}
L(t)=\epsilon _{1}E(t)+\epsilon _{2}\psi _{1}(t)+\frac{\epsilon _{2}b_{1}}{2}%
\psi _{2}(t),  \label{4.1}
\end{equation}%
where
\begin{equation*}
\begin{split}
\psi _{1}(t)& =\int_{\Omega }u_{t}udx, \\
\psi _{2}(t)& =\int_{\Gamma _{0}}\int_{-\infty }^{+\infty }(\xi ^{2}+\eta
)\left( \int_{0}^{t}\phi (\xi ,s)ds\right) ^{2}d\xi d\rho ,
\end{split}%
\end{equation*}
and $\epsilon _{1}$, $\epsilon _{2}$ are positive constants.

\begin{lemma}
If $(u,\phi )$ is a regular solution of the problem (\ref{2.21}). Then, the
following equality holds
\begin{equation*}
\begin{split}
& \int_{\Gamma _{0}}\int_{-\infty }^{+\infty }(\xi ^{2}+\eta )\phi (\xi
,t)\int_{0}^{t}\phi (\xi ,s)dsd\xi d\rho = \\
& \int_{\Gamma _{0}}u(x,t)\int_{-\infty }^{+\infty }\phi (\xi ,t)\mu (\xi
)d\xi d\rho -\int_{\Gamma _{0}}\int_{-\infty }^{+\infty }|\phi (\xi
,t)|^{2}d\xi d\rho .
\end{split}%
\end{equation*}
\end{lemma}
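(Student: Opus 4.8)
The plan is to read the identity directly off the second equation of the augmented system (\ref{2.21}) after integrating it once in time. It is worth noting at the outset that the left-hand side is exactly $\tfrac12\psi_2'(t)$, since $\partial_t\!\int_0^t\phi(\xi,s)\,ds=\phi(\xi,t)$ gives $\psi_2'(t)=2\int_{\Gamma_0}\int_{-\infty}^{+\infty}(\xi^2+\eta)\phi(\xi,t)\int_0^t\phi(\xi,s)\,ds\,d\xi\,d\rho$; this is precisely why the lemma is the natural first step toward differentiating the Lyapunov functional $L$. The goal, then, is to rewrite the weighted product $(\xi^2+\eta)\phi$ using the evolution equation for $\phi$.

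First I would take the second equation of (\ref{2.21}),
\[
\partial_s\phi(\xi,s)+(\xi^2+\eta)\phi(\xi,s)=\mu(\xi)\,u_s(x,s),
\]
and integrate it in $s$ over $[0,t]$ for fixed $x\in\Gamma_0$ and $\xi\in\mathbb{R}$. Using the initial condition $\phi(\xi,0)=0$ from (\ref{2.21}) together with $\int_0^t u_s\,ds=u(x,t)-u(x,0)$, this yields
\[
(\xi^2+\eta)\int_0^t\phi(\xi,s)\,ds=\mu(\xi)\bigl(u(x,t)-u(x,0)\bigr)-\phi(\xi,t).
\]
Next I would multiply this relation by $\phi(\xi,t)$ and integrate over $(\xi,x)\in\mathbb{R}\times\Gamma_0$. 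The left term reproduces the left-hand side of the claim, the $\phi(\xi,t)^2$ term produces $-\int_{\Gamma_0}\int_{-\infty}^{+\infty}|\phi(\xi,t)|^2\,d\xi\,d\rho$, and since $\mu(\xi)$ depends only on $\xi$ while $u(x,t)$ depends only on $x$, the factors separate and the remaining term becomes $\int_{\Gamma_0}u(x,t)\int_{-\infty}^{+\infty}\phi(\xi,t)\mu(\xi)\,d\xi\,d\rho$, exactly as stated.

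The only point requiring genuine care — and the step I expect to be the main obstacle — is the contribution of the initial datum on $\Gamma_0$: the fundamental theorem of calculus also generates a term $-\int_{\Gamma_0}u(x,0)\int_{-\infty}^{+\infty}\mu(\xi)\phi(\xi,t)\,d\xi\,d\rho$, which does not appear on the right-hand side of the asserted identity. To match the statement one must argue that this term drops out, either from the initial conditions under which the system is analyzed or from the convention $u_0|_{\Gamma_0}=0$ implicit in the working space; the companion condition $\phi(\xi,0)=0$ has already disposed of the analogous $\phi(\xi,0)$ contribution. Apart from this bookkeeping, the manipulations merely interchange the $s$-integration with the $(\xi,x)$-integration and differentiate under the integral sign, all of which are justified for the regular solutions of (\ref{2.21}) furnished by the local existence theorem.
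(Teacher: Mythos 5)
Your proposal follows the paper's own proof essentially verbatim: integrate the second equation of (\ref{2.21}) in time using $\phi(\xi,0)=0$ to obtain $(\xi^{2}+\eta)\int_{0}^{t}\phi(\xi,s)\,ds=u(x,t)\mu(\xi)-\phi(\xi,t)$ on $\Gamma_{0}$, then multiply by $\phi(\xi,t)$ and integrate over $\Gamma_{0}\times(-\infty,+\infty)$. The initial-datum term $\mu(\xi)u(x,0)$ that you flag as the delicate point is simply dropped without comment in the paper's proof (which vaguely invokes ``equations 3 and 6'' of the system), so your version is, if anything, more careful than the original on exactly the one step where care is needed.
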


\begin{proof}
From the second equation of (\ref{2.21}), we have
\begin{equation}
(\xi ^{2}+\eta )\phi (\xi ,t)=u_{t}(x,t)\mu (\xi )-\partial _{t}\phi (\xi
,t),\quad \forall x\in \Gamma _{0}.  \label{4.2}
\end{equation}%
Integrating (\ref{4.2}) over $\left[0, t\right]$, and using equations 3 and
6 from system (\ref{2.21}), we get
\begin{equation}
\int_{0}^{t}(\xi ^{2}+\eta )\phi (\xi ,s)ds=u(x,t)\mu (\xi )-\phi (\xi
,t),\quad \forall x\in \Gamma _{0},  \label{4.3}
\end{equation}%
hence,
\begin{equation}
(\xi ^{2}+\eta )\int_{0}^{t}\phi (\xi ,s)ds=u(x,t)\mu (\xi )-\phi (\xi
,t),\quad \forall x\in \Gamma _{0}.  \label{4.4}
\end{equation}%
A multiplying by $\phi $ followed by an integration over $\Gamma _{0}\times
(-\infty ,+\infty )$, leads to
\begin{equation*}
\begin{split}
& \int_{\Gamma _{0}}\int_{-\infty }^{+\infty }(\xi ^{2}+\eta )\phi (\xi
,t)\int_{0}^{t}\phi (\xi ,s)dsd\xi d\rho = \\
& \int_{\Gamma _{0}}u(x,t)\int_{-\infty }^{+\infty }\phi (\xi ,t)\mu (\xi
)d\xi d\rho -\int_{\Gamma _{0}}\int_{-\infty }^{+\infty }|\phi (\xi
,t)|^{2}d\xi d\rho .
\end{split}%
\end{equation*}
\end{proof}

\begin{lemma}
For any $(u,\phi )$ solution of problem (\ref{2.21}), we have
\begin{equation}
\alpha _{1}E(t)\leq L(t)\leq \alpha _{2}E(t),  \label{4.5}
\end{equation}
where $\alpha _{1}$,$\alpha _{2}$ are positive constants.
\end{lemma}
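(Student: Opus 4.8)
The plan is to show that the two correction terms $\psi_1$ and $\psi_2$ appearing in $L(t)$ are each dominated by the energy $E(t)$, so that $L(t)$ differs from $\epsilon_1 E(t)$ by at most a fixed multiple of $E(t)$. Since the global existence argument gives $I(t)>0$, inequality $(3.8)$ simultaneously yields $E(t)\geq 0$ and the pointwise bounds $\Arrowvert u_t\Arrowvert_2^2\leq 2E(t)$, $\Arrowvert\nabla u\Arrowvert_2^2\leq\frac{2p}{p-2}E(t)$, and $\int_{\Gamma_0}\int_{-\infty}^{+\infty}|\phi(\xi,t)|^2 d\xi d\rho\leq\frac{2}{b_1}E(t)$, which I will use repeatedly.

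For $\psi_1(t)=\int_\Omega u_t u\,dx$, I would apply Cauchy--Schwarz and Young's inequality followed by the Poincar\'e inequality (Lemma 1 with $q=1$, constant $C_\ast$) to obtain
\[
|\psi_1(t)|\leq\frac{1}{2}\Arrowvert u_t\Arrowvert_2^2+\frac{C_\ast^2}{2}\Arrowvert\nabla u\Arrowvert_2^2,
\]
and then invoke the two energy bounds above to conclude $|\psi_1(t)|\leq C_3\, E(t)$ for a constant $C_3$ depending only on $p$ and $C_\ast$.

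The substantive step is the bound on $\psi_2$. I would first use the identity $(4.4)$, namely $(\xi^2+\eta)\int_0^t\phi(\xi,s)ds=u(x,t)\mu(\xi)-\phi(\xi,t)$, to rewrite
\[
\psi_2(t)=\int_{\Gamma_0}\int_{-\infty}^{+\infty}\frac{\left(u(x,t)\mu(\xi)-\phi(\xi,t)\right)^2}{\xi^2+\eta}\,d\xi\, d\rho,
\]
and then split it using $(a-b)^2\leq 2a^2+2b^2$. For the $u$-part, the inner integral $\int_{-\infty}^{+\infty}\frac{\mu^2(\xi)}{\xi^2+\eta}\,d\xi$ equals $A_0=\frac{\pi}{\sin(\alpha\pi)}\eta^{\alpha-1}$ by the resolvent identity of \cite{3} with $\lambda=0$, leaving the term $2A_0\Arrowvert u\Arrowvert_{2,\Gamma_0}^2$, which the trace inequality controls by $\Arrowvert\nabla u\Arrowvert_2^2$ and hence by $E(t)$. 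For the $\phi$-part, I would use $\frac{1}{\xi^2+\eta}\leq\frac{1}{\eta}$ together with the energy bound on $\int_{\Gamma_0}\int_{-\infty}^{+\infty}|\phi|^2\,d\xi\,d\rho$, obtaining $\psi_2(t)\leq C_4\, E(t)$.

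Combining the three estimates gives $|L(t)-\epsilon_1 E(t)|\leq C\, E(t)$ with $C=\epsilon_2 C_3+\frac{\epsilon_2 b_1}{2}C_4$, so that $(\epsilon_1-C)E(t)\leq L(t)\leq(\epsilon_1+C)E(t)$; choosing $\epsilon_1$ large enough relative to $\epsilon_2$ makes $\alpha_1:=\epsilon_1-C>0$, while $\alpha_2:=\epsilon_1+C>0$ automatically, which proves the claim. I expect the treatment of $\psi_2$ to be the main obstacle: it requires correctly exploiting the $(\xi^2+\eta)$ weight to cancel against the identity $(4.4)$ and using the exact value of the singular integral from \cite{3}, and the estimate $\frac{1}{\xi^2+\eta}\leq\frac{1}{\eta}$ tacitly assumes $\eta>0$, so the degenerate case $\eta=0$ would require a separate argument.
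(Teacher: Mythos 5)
Your proof is correct and follows essentially the same route as the paper: the same identity (4.4) to rewrite $\psi_2$, the same splitting (your $(a-b)^2\leq 2a^2+2b^2$ is just the paper's Young step (4.9) with the parameter fixed), the same use of the trace embedding of Lemma 2 and of the resolvent integral of Lemma 5 at $\lambda=0$ to produce $A_0$, the same Poincar\'e--Young bound for $\psi_1$, and the same final choice of $\epsilon_1$ large relative to $\epsilon_2$. The only cosmetic difference is the absorption step: you dominate $\|u_t\|_2^2$, $\|\nabla u\|_2^2$ and $\int_{\Gamma_0}\int_{-\infty}^{+\infty}|\phi|^2 d\xi d\rho$ individually by $E(t)$ via (3.8), whereas the paper adds and subtracts $N E(t)$ and invokes (3.7); both versions rest on the same hypotheses ($I(t)\geq 0$ from the global existence setting, and $\eta>0$, which you rightly flag).
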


\begin{proof}
From $(4.3)$, we get
\begin{equation}
\int_{0}^{t}\phi (\xi ,s)ds=\frac{-\phi (\xi ,t)}{\xi ^{2}+\eta }+\frac{%
u(x,t)\mu (\xi )}{\xi ^{2}+\eta },\quad \forall x\in \Gamma _{0}.
\label{4.6}
\end{equation}%
Thus
\begin{equation}
\left( \int_{0}^{t}\phi (\xi ,s)ds\right) ^{2}=\frac{|\phi (\xi ,t)|^{2}}{%
(\xi ^{2}+\eta )^{2}}+\frac{|u(x,t)|^{2}\mu ^{2}(\xi )}{(\xi ^{2}+\eta )^{2}}%
-2\frac{\phi (\xi ,t)u(x,t)\mu (\xi )}{(\xi ^{2}+\eta )^{2}}.  \label{4.7}
\end{equation}%
A multiplying by $\xi ^{2}+\eta $ in $(4.7)$ followed by an integration over
$\Gamma _{0}\times (-\infty ,+\infty )$, leads to

\begin{equation}
\begin{split}
|\psi _{2}(t)|& \leq \int_{\Gamma _{0}}\int_{-\infty }^{+\infty }\frac{|\phi
(\xi ,t)|^{2}}{\xi ^{2}+\eta }d\xi d\rho +\int_{\Gamma
_{0}}|u(x,t)|^{2}\int_{-\infty }^{+\infty }\frac{\mu ^{2}(\xi )}{\xi
^{2}+\eta }d\xi d\rho \\
& +2\int_{\Gamma _{0}}\int_{-\infty }^{+\infty }\frac{|\phi (\xi
,t)u(x,t)\mu (\xi )|}{\xi ^{2}+\eta }d\xi d\rho .
\end{split}
\label{4.8}
\end{equation}

Using Young's inequality in order to have an estimation of the last term in $%
(4.8)$, we get for any $\delta >0$

\begin{equation}
\begin{split}
\int_{\Gamma _{0}}\int_{-\infty }^{+\infty }\frac{|\phi (\xi ,t)u(x,t)\mu
(\xi )|}{\xi ^{2}+\eta }d\xi d\rho & =\int_{\Gamma _{0}}\int_{-\infty
}^{+\infty }\frac{|\phi (\xi ,t)|}{(\xi ^{2}+\eta )^{\frac{1}{2}}}\frac{%
|u(x,t)\mu (\xi )|}{(\xi ^{2}+\eta )^{\frac{1}{2}}}d\xi d\rho \\
& \leq \frac{1}{4\delta }\int_{\Gamma _{0}}\int_{-\infty }^{+\infty }\frac{%
|\phi (\xi ,t)|^{2}}{\xi ^{2}+\eta }d\xi d\rho \\
& +\delta \int_{\Gamma _{0}}|u(x,t)|^{2}\int_{-\infty }^{+\infty }\frac{\mu
^{2}(\xi )}{\xi ^{2}+\eta }d\xi d\rho .
\end{split}
\label{4.9}
\end{equation}%
Combining $(4.9)$ and $(4.8)$, we obtain
\begin{equation}
\begin{split}
|\psi _{2}(t)|& \leq (\frac{2\delta +1}{2\delta })\int_{\Gamma
_{0}}\int_{-\infty }^{+\infty }\frac{|\phi (\xi ,t)|^{2}}{\xi ^{2}+\eta }%
d\xi d\rho \\
& +(2\delta +1)\int_{\Gamma _{0}}|u(x,t)|^{2}\int_{-\infty }^{+\infty }\frac{%
\mu ^{2}(\xi )}{\xi ^{2}+\eta }d\xi d\rho .
\end{split}
\label{4.10}
\end{equation}%
Since $\frac{1}{\xi ^{2}+\eta }\leq \frac{1}{\eta }$, then
\begin{equation}
\begin{split}
|\psi _{2}(t)& |\leq (\frac{2\delta +1}{2\delta \eta })\int_{\Gamma
_{0}}\int_{-\infty }^{+\infty }|\phi (\xi ,t)|^{2}d\xi d\rho \\
& +(2\delta +1)\int_{\Gamma _{0}}|u(x,t)|^{2}\int_{-\infty }^{+\infty }\frac{%
\mu ^{2}(\xi )}{\xi ^{2}+\eta }d\xi d\rho .
\end{split}
\label{4.11}
\end{equation}%
Applying Lammas 2 and 5 we get
\begin{equation}
|\psi _{2}(t)|\leq (\frac{2\delta +1}{2\delta \eta })\int_{\Gamma
_{0}}\int_{-\infty }^{+\infty }|\phi (\xi ,t)|^{2}d\xi d\rho
+A_{0}B_{q}(2\delta +1)\Arrowvert\nabla u\Arrowvert_{2}^{2}.  \label{4.12}
\end{equation}%
By Poincare-type inequality and Young's inequality, we obtain
\begin{equation}
|\psi _{1}(t)|\leq \frac{1}{2}\Arrowvert u_{t}\Arrowvert_{2}^{2}+\frac{%
C_{\ast }}{2}\Arrowvert\nabla u\Arrowvert_{2}^{2}.  \label{4.13}
\end{equation}%
Adding $(4.13)$ to $(4.12)$:
\begin{equation}
\begin{split}
|\psi _{1}(t)+\frac{b_{1}}{2}\psi _{2}(t)|& \leq |\psi _{1}(t)|+\frac{b_{1}}{%
2}|\psi _{2}(t)| \\
& \leq \frac{1}{2}\Arrowvert u_{t}\Arrowvert_{2}^{2}+\frac{1}{2}\left[
A_{0}B_{q}b_{1}(2\delta +1)+C_{\ast }\right] \Arrowvert\nabla u\Arrowvert%
_{2}^{2} \\
& +\frac{b_{1}}{2}\left[ \frac{2\delta +1}{2\delta \eta }\right]
\int_{\Gamma _{0}}\int_{-\infty }^{+\infty }|\phi (\xi ,t)|^{2}d\xi d\rho .
\end{split}
\label{4.14}
\end{equation}%
Therefore, By the energy definition given in $(3.1)$, for all $N>0$, we
have:
\begin{equation}
\begin{split}
|\psi _{1}(t)+\frac{b_{1}}{2}\psi _{2}(t)|& \leq NE(t)+\frac{1-N}{2}%
\Arrowvert u_{t}\Arrowvert_{2}^{2}+\frac{N}{p}\Arrowvert u_{t}\Arrowvert%
_{p}^{p} \\
& +\frac{1}{2}\left[ A_{0}B_{q}b_{1}(2\delta +1)+C_{\ast }-N\right] %
\Arrowvert\nabla u\Arrowvert_{2}^{2} \\
& +\frac{b_{1}}{2}\left[ \frac{2\delta +1}{2\delta \eta }-N\right]
\int_{\Gamma _{0}}\int_{-\infty }^{+\infty }|\phi (\xi ,t)|^{2}d\xi d\rho .
\end{split}
\label{4.15}
\end{equation}%
From $(3.7)$ and (\ref{4.15}), we finally get
\begin{equation}
\begin{split}
|\psi _{1}(t)+\frac{b_{1}}{2}\psi _{2}(t)|& \leq NE(t)+\frac{1-N}{2}%
\Arrowvert u_{t}\Arrowvert_{2}^{2} \\
& +\frac{1}{2}\left[ A_{0}B_{q}b_{1}(2\delta +1)+C_{\ast }-\frac{p-2}{2p}N%
\right] \Arrowvert\nabla u\Arrowvert_{2}^{2} \\
& +\frac{b_{1}}{2}\left[ \frac{2\delta +1}{2\delta \eta }-N\right]
\int_{\Gamma _{0}}\int_{-\infty }^{+\infty }|\phi (\xi ,t)|^{2}d\xi d\rho ,
\end{split}
\label{4.16}
\end{equation}%
where $N$ and $\epsilon _{1}$ are chosen as follows
\begin{equation*}
N>\max \{\frac{2\delta +1}{2\delta \eta },\ \frac{2p(A_{0}B_{q}b_{1}(2\delta
+1)+C_{\ast })}{p-2},\ 1\}
\end{equation*}%
\begin{equation*}
\epsilon _{1}\geq N\epsilon _{2}.
\end{equation*}%
Then, we conclude from (4.16)
\begin{equation*}
\alpha _{1}E(t)\leq L(t)\leq \alpha _{2}E(t),
\end{equation*}%
where
\begin{equation*}
\alpha _{1}=\epsilon _{1}-N\epsilon _{2}
\end{equation*}%
and
\begin{equation*}
\alpha _{2}=\epsilon _{1}+N\epsilon _{2}.
\end{equation*}
\end{proof}

Now, we prove the exponential decay of global solution.

\begin{theorem}
If $(2.4)$ and $(3.5)$ hold. Then, there exist $k$ and $K$, positive
constants such that the global solution of (\ref{2.21}) verifies
\begin{equation}
E(t)\leq Ke^{-kt}.  \label{4.17}
\end{equation}
\end{theorem}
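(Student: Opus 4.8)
The plan is to differentiate the Lyapunov functional $L(t)$ defined in $(4.1)$ along solutions and to show that it satisfies a differential inequality of the form $L'(t)\le -\beta_0 E(t)$ for some constant $\beta_0>0$. Combined with the norm equivalence $\alpha_1 E(t)\le L(t)\le \alpha_2 E(t)$ already established in $(4.5)$, this immediately yields the exponential decay, since $E(t)\ge L(t)/\alpha_2$ turns the inequality into $L'(t)\le -(\beta_0/\alpha_2)L(t)$, which integrates to $L(t)\le L(0)e^{-kt}$ with $k=\beta_0/\alpha_2$; feeding this back through $\alpha_1 E(t)\le L(t)$ gives $E(t)\le K e^{-kt}$ with $K=\alpha_2 E(0)/\alpha_1$.

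First I would compute $\psi_1'(t)$. Differentiating $\psi_1(t)=\int_\Omega u_t u\,dx$, substituting the first equation of $(2.21)$ for $u_{tt}$, integrating by parts over $\Omega$, and using the boundary conditions (equations 3 and 4 of $(2.21)$, with $u=0$ on $\Gamma_1$) gives
\begin{equation*}
\psi_1'(t)=\|u_t\|_2^2-\|\nabla u\|_2^2+\|u\|_p^p-a\int_\Omega u u_t\,dx+\int_0^t g(t-s)\!\int_\Omega \nabla u(t)\cdot\nabla u(s)\,dx\,ds-b_1\int_{\Gamma_0}u\!\int_{-\infty}^{+\infty}\mu(\xi)\phi(\xi,t)\,d\xi\,d\rho .
\end{equation*}
The memory integral is rewritten by adding and subtracting $\nabla u(t)$, producing the term $\bigl(1-\int_0^t g\bigr)\|\nabla u\|_2^2$ plus a remainder controlled by $(g\circ\nabla u)(t)$ via Young's inequality. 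Next I would differentiate $\psi_2(t)$ and use the identity proved in the first lemma of this section (the one for $\int_{\Gamma_0}\int(\xi^2+\eta)\phi\int_0^t\phi\,ds$) to express $\psi_2'(t)$ through $\int_{\Gamma_0}\int|\phi|^2$ and the same boundary cross term $\int_{\Gamma_0}u\int\mu\phi$.

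Assembling $L'(t)=\epsilon_1 E'(t)+\epsilon_2\psi_1'(t)+\tfrac{\epsilon_2 b_1}{2}\psi_2'(t)$ and invoking the energy identity $(3.2)$, the $b_1\int_{\Gamma_0}u\int\mu\phi$ cross terms coming from $\psi_1'$ and $\psi_2'$ cancel, leaving a combination of $\|u_t\|_2^2$, $\|\nabla u\|_2^2$, $\|u\|_p^p$, $(g\circ\nabla u)(t)$, $(g'\circ\nabla u)(t)$, $g(t)\|\nabla u\|_2^2$, and the dissipative integral $\int_{\Gamma_0}\int(\xi^2+\eta)|\phi|^2$. The crucial estimate is the control of the nonlinear term: using $(3.7)$ together with the smallness hypothesis $\beta=C_*^p\bigl(\tfrac{2p}{p-2}E(0)\bigr)^{(p-2)/2}<1$ from $(3.5)$ (and the fact that $u(t)\in\aleph$ so $I(t)>0$), one bounds $\|u\|_p^p\le\beta\,\|\nabla u\|_2^2$ with $\beta<1$, so that the net coefficient of $\|\nabla u\|_2^2$ in $\epsilon_2\psi_1'$ remains strictly negative. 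The memory remainder $(g\circ\nabla u)(t)$ is absorbed by using $(G_2)$, namely $g'(t)\le-\xi g(t)$, which lets $\tfrac12(g'\circ\nabla u)(t)$ in $E'(t)$ dominate it once $\epsilon_1$ is taken large; the damping terms $a\|u_t\|_2^2$ and the boundary integral similarly control the remaining positive contributions. Finally I would fix $\delta$, then choose $\epsilon_2$ small and $\epsilon_1$ large (respecting $\epsilon_1\ge N\epsilon_2$ so that $(4.5)$ stays valid) so that every coefficient is negative and $L'(t)\le-\beta_0 E(t)$ holds.

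The hard part, I expect, is obtaining a clean strictly-negative coefficient on $\|\nabla u\|_2^2$ after combining the $-\|\nabla u\|_2^2$ arising from $\psi_1'$, the positive nonlinear contribution $+\|u\|_p^p$, and the memory cross term; this is precisely where the smallness condition $(3.5)$ and the relaxation assumptions $(G_1)$–$(G_2)$ must be balanced simultaneously, and where the bookkeeping of the four free constants $\delta$, $N$, $\epsilon_1$, $\epsilon_2$ becomes delicate, since tightening one of them to kill a bad term tends to worsen another.
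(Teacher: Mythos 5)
Your proposal follows essentially the same route as the paper's proof: differentiate the Lyapunov functional $(4.1)$, use the identity lemma (Lemma 8) to cancel the boundary cross terms $b_1\int_{\Gamma_0}u\int\mu\phi$, estimate $\int_\Omega uu_t\,dx$ by Young's inequality, control $\Vert u\Vert_p^p$ through $(3.7)$ and the smallness hypothesis $(3.5)$, choose the constants so that every coefficient is negative, and conclude by the equivalence $(4.5)$ followed by integration of $L'(t)\le -kL(t)$. The one substantive difference is your treatment of the viscoelastic term: you retain $\int_0^t g(t-s)\int_\Omega\nabla u(t)\cdot\nabla u(s)\,dx\,ds$ in $\psi_1'(t)$, split it by adding and subtracting $\nabla u(t)$, and propose to absorb the $(g\circ\nabla u)$ remainder using $(G_2)$ together with a large $\epsilon_1$, whereas the paper's own computation $(4.19)$--$(4.20)$ silently drops this term altogether (and correspondingly never confronts a $(g\circ\nabla u)$ contribution in $(4.22)$--$(4.25)$). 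Your version is the more careful one, but be aware of the consequence you only partially flag at the end: once the memory term is kept, the leading coefficient of $\Vert\nabla u\Vert_2^2$ is $-\left(1-\int_0^t g(s)\,ds\right)\le -l$ rather than $-1$, so obtaining a strictly negative net coefficient requires $\beta<l$ (with additional room for the Young corrections in $\delta$ and $\delta'$), which is strictly stronger than the condition $\beta<1$ stated in $(3.5)$ when $l<1$. This is not a defect of your plan so much as a gap in the paper's hypothesis that its own proof conceals by the omission; to make your argument close, you should either strengthen the assumption to $\beta<l$ (as is standard in the viscoelastic literature) or note explicitly that $(3.5)$ as written does not suffice.
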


\begin{proof}
By differentiation in $(4.1)$, we get
\begin{equation}
\begin{split}
L^{\prime }(t)& =\epsilon _{1}E^{\prime }(t)+\epsilon _{2}\Arrowvert u_{t}%
\Arrowvert_{2}^{2}+\epsilon _{2}\int_{\Omega }u_{tt}udx \\
& +\epsilon _{2}b_{1}\int_{\Gamma _{0}}\int_{-\infty }^{+\infty }(\xi
^{2}+\eta )\phi (\xi ,t)\int_{0}^{t}\phi (\xi ,s)dsd\xi d\rho .
\end{split}
\label{4.18}
\end{equation}%
Combining with (\ref{2.21}) to obtain
\begin{equation}
\begin{split}
L^{\prime }(t)& =\epsilon _{1}E^{\prime }(t)+\epsilon _{2}\left[ \Arrowvert %
u_{t}\Arrowvert_{2}^{2}-\Arrowvert\nabla u\Arrowvert_{2}^{2}+\Arrowvert u%
\Arrowvert_{p}^{p}-a\int_{\Omega }uu_{t}dx\right] \\
& -b_{1}\epsilon _{2}\int_{\Gamma _{0}}u(x,t)\int_{-\infty }^{+\infty }\mu
(\xi )\phi (\xi ,t)d\xi d\rho \\
& +b_{1}\epsilon _{2}\int_{\Gamma _{0}}\int_{-\infty }^{+\infty }(\xi
^{2}+\eta )\phi (\xi ,t)\int_{0}^{t}\phi (\xi ,s)dsd\xi d\rho .
\end{split}
\label{4.19}
\end{equation}%
An application of Lemma [8] leads to
\begin{equation}
\begin{split}
L^{\prime }(t)& =\epsilon _{1}E^{\prime }(t)+\epsilon _{2}\Arrowvert u_{t}%
\Arrowvert_{2}^{2}-\epsilon _{2}\Arrowvert\nabla u\Arrowvert%
_{2}^{2}+\epsilon _{2}\Arrowvert u\Arrowvert_{p}^{p} \\
& -b_{1}\epsilon _{2}\int_{\Gamma _{0}}\int_{-\infty }^{+\infty }|\phi (\xi
,t)|^{2}d\xi d\rho -a\epsilon _{2}\int_{\Omega }uu_{t}dx.
\end{split}
\label{4.20}
\end{equation}%
Using Poincare-type inequality and Young's inequality on the last term of $%
(4.20)$, we get for all $\delta ^{\prime }>0$
\begin{equation}
\int_{\Omega }uu_{t}dx\leq \frac{1}{4\delta ^{\prime }}\Arrowvert u_{t}%
\Arrowvert_{2}^{2}+C_{\ast }\delta ^{\prime }\Arrowvert\nabla u\Arrowvert%
_{2}^{2}.  \label{4.21}
\end{equation}%
From $(4.20)$, $(4.21)$ and $(3.2)$, we obtain
\begin{equation}
\begin{split}
L^{\prime }(t)& \leq \left[ -a\epsilon _{1}+\epsilon _{2}(1+\frac{a}{4\delta
^{\prime }})\right] \Arrowvert u_{t}\Arrowvert_{2}^{2}+\epsilon _{2}\left[
-1+\delta ^{\prime }C_{\ast }a\right] \Arrowvert\nabla u\Arrowvert_{2}^{2} \\
& +\epsilon _{2}\Arrowvert u\Arrowvert_{p}^{p}-b_{1}\epsilon
_{2}\int_{\Gamma _{0}}\int_{-\infty }^{+\infty }|\phi (\xi ,t)|^{2}d\xi
d\rho .
\end{split}
\label{4.22}
\end{equation}%
Using $(3.7)$ to get
\begin{equation}
\begin{split}
L^{\prime }(t)& \leq \left[ -a\epsilon _{1}+\epsilon _{2}(1+\frac{a}{4\delta
^{\prime }})\right] \Arrowvert u_{t}\Arrowvert_{2}^{2}+\epsilon _{2}\left[
-1+\delta ^{\prime }C_{\ast }a+C_{\ast }^{p}(\frac{2p}{p-2})^{\frac{p-2}{2}}%
\right] \Arrowvert\nabla u\Arrowvert_{2}^{2} \\
& -b_{1}\epsilon _{2}\int_{\Gamma _{0}}\int_{-\infty }^{+\infty }|\phi (\xi
,t)|^{2}d\xi d\rho .
\end{split}
\label{4.23}
\end{equation}%
On the other hand, from $(3.5)$
\begin{equation*}
-1+C_{\ast }^{p}(\frac{2p}{p-2})^{\frac{p-2}{2}}<0.
\end{equation*}%
For a small enough $\delta ^{\prime }$, we may have
\begin{equation*}
-1+\delta ^{\prime }C_{\ast }a+C_{\ast }^{p}(\frac{2p}{p-2})^{\frac{p-2}{2}%
}<0.
\end{equation*}%
Then, choosing $d>0$, depending only on $\delta ^{\prime }$ such that
\begin{equation}
\begin{split}
L^{\prime }(t& )\leq \left[ -a\epsilon _{1}+\epsilon _{2}(1+\frac{a}{4\delta
^{\prime }})\right] \Arrowvert u_{t}\Arrowvert_{2}^{2}-\epsilon _{2}d%
\Arrowvert\nabla u\Arrowvert_{2}^{2} \\
& -b_{1}\epsilon _{2}\int_{\Gamma _{0}}\int_{-\infty }^{+\infty }|\phi (\xi
,t)|^{2}d\xi d\rho .
\end{split}
\label{4.24}
\end{equation}%
Equivalently, for all positive constant $M$, we have
\begin{equation}
\begin{split}
L^{\prime }(t)& \leq \left[ -a\epsilon _{1}+\epsilon _{2}(1+\frac{a}{4\delta
^{\prime }}+\frac{M}{2})\right] \Arrowvert u_{t}\Arrowvert_{2}^{2}+\epsilon
_{2}\left[ \frac{M}{2}-d\right] \Arrowvert\nabla u\Arrowvert_{2}^{2} \\
& +b_{1}\epsilon _{2}\left[ \frac{M}{2}-1\right] \int_{\Gamma
_{0}}\int_{-\infty }^{+\infty }|\phi (\xi ,t)|^{2}d\xi d\rho -\epsilon
_{2}ME(t).
\end{split}
\label{4.25}
\end{equation}%
For $\epsilon _{1}$ and $M<\min \{2,2d\}$ chosen such that
\begin{equation*}
\epsilon _{1}>\frac{\epsilon _{2}(1+\frac{a}{4\delta ^{\prime }}+\frac{M}{2})%
}{a}.
\end{equation*}%
We obtain from $(4.25)$
\begin{equation}
L^{\prime }(t)\leq -M\epsilon _{2}E(t)\leq \frac{-\epsilon _{2}M}{\alpha _{2}%
}L(t),  \label{4.26}
\end{equation}%
as a result of $(4.5)$. Now, a simple integration of $(4.26)$ yields
\begin{equation*}
L(t)\leq L(0)e^{-kt},
\end{equation*}%
where $k=\frac{\epsilon _{2}M}{\alpha _{2}}$. Another use of $(4.5)$
provides $(4.17)$.
\end{proof}

\section{Blow up}

In the current section, we follow the same approach given in \cite{7} to
prove the blow up of solution of problem (\ref{2.21}).

\begin{remark}
By integration of (3.2) over $(0,t)$, we have
\begin{equation}
\begin{split}
E(t)& =E(0)-a\int_{0}^{t}\Arrowvert u_{s}\Arrowvert_{2}^{2}ds \\
& +\frac{1}{2}\left( 1-\int_{0}^{t}g(s)ds\right) \Arrowvert\nabla u\Arrowvert%
_{2}^{2}+\frac{1}{2}\left( g\circ \nabla u\right) \left( t\right) \\
& -b_{1}\int_{0}^{t}\int_{\Gamma _{0}}\int_{-\infty }^{+\infty }(\xi
^{2}+\eta )|\phi (\xi ,s)|^{2}d\xi d\rho ds.
\end{split}
\label{5.1}
\end{equation}
\end{remark}

Now, let us define $F(t)$:

\begin{eqnarray}
F(t) &=&\Arrowvert u\Arrowvert_{2}^{2}+a\int_{0}^{t}\Arrowvert u\Arrowvert%
_{2}^{2}ds  \notag \\
&&-\frac{1}{2}\left( 1-\int_{0}^{t}g(s)ds\right) \Arrowvert\nabla u\Arrowvert%
_{2}^{2}-\frac{1}{2}\left( g\circ \nabla u\right) \left( t\right) +b_{1}H(t),
\label{5.2}
\end{eqnarray}%
where
\begin{equation}
H(t)=\int_{0}^{t}\int_{\Gamma _{0}}\int_{-\infty }^{+\infty }(\xi ^{2}+\eta
)\left( \int_{0}^{s}\phi (\xi ,z)dz\right) ^{2}d\xi d\rho ds.  \label{5.3}
\end{equation}

\begin{lemma}
Assuming $\Arrowvert\nabla u\Arrowvert_{2}^{2}$ is bounded on $[0,T)$, Then
\begin{equation}
H(t)\leq C<+\infty .  \label{5.4}
\end{equation}%
More precisely
\begin{equation*}
H(t)\leq \frac{1}{2}C_{1}B_{q}e^{-\eta C_{2}}\left[ C_{2}^{2\alpha -1}\alpha
+C_{2}^{3-2\alpha }\eta \right] \Gamma (\alpha )T^{4}
\end{equation*}%
where
\begin{equation*}
C_{1}=\sup_{t\in \lbrack 0,T)}\{\Arrowvert\nabla u\Arrowvert_{2}^{2},1\}.
\end{equation*}
\end{lemma}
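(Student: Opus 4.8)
The plan is to remove the time-convolution hidden in $\phi$ by solving the transport equation in $\xi$ explicitly and then reducing the remaining $\xi$-integral to Euler Gamma integrals. Solving the second equation of (\ref{2.21}) with $\phi(\xi,0)=0$ gives
\begin{equation*}
\phi(\xi,z)=\mu(\xi)\int_0^z e^{-(\xi^2+\eta)(z-\tau)}u_\tau(x,\tau)\,d\tau .
\end{equation*}
I would substitute this into the inner integral $\int_0^s\phi(\xi,z)\,dz$ occurring in $H(t)$, exchange the order of integration (Fubini), and integrate by parts in $\tau$, using $\phi(\xi,0)=0$ and the boundary relation $(4.3)$ to discard the endpoint terms. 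This trades the factor $u_\tau$ for $u$ and leaves an expression of the form $\int_0^s\phi(\xi,z)\,dz=\mu(\xi)\int_0^s e^{-(\xi^2+\eta)(s-\tau)}u(x,\tau)\,d\tau$. This is the crucial step, because the target bound is stated in terms of $\Vert\nabla u\Vert_2$ (through $C_1$) and of the trace of $u$, not in terms of $u_t$.

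Next I would square this identity, apply the Cauchy--Schwarz inequality in $\tau$ to pull out a factor $s\le T$, and multiply through by the weight $\xi^2+\eta$, so that the integrand of $H(t)$ is dominated by $(\xi^2+\eta)\,\mu^2(\xi)\,e^{-2(\xi^2+\eta)(s-\tau)}\,u(x,\tau)^2$ up to powers of the time variables. The heart of the computation is then the one-dimensional integral of $(\xi^2+\eta)\,\mu^2(\xi)\,e^{-(\xi^2+\eta)C_2}$ over $\xi\in\mathbb{R}$. Since $\mu^2(\xi)=|\xi|^{2\alpha-1}$, the substitution $r=(\xi^2+\eta)C_2$ (equivalently a scaling of $\xi^2$) turns the $\xi^2$-weighted part and the $\eta$-weighted part of the weight $\xi^2+\eta$ into two separate Euler Gamma integrals; factoring out $e^{-\eta C_2}$ produces exactly the prefactor $e^{-\eta C_2}\,\Gamma(\alpha)$ and the two-term bracket displayed in the statement.

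Finally I would integrate over $\Gamma_0$, estimating $\int_{\Gamma_0}u(x,\tau)^2\,d\rho$ by the trace embedding of Lemma 2 and by the standing hypothesis $\sup_{[0,T)}\Vert\nabla u\Vert_2^2\le C_1$; this produces the factor $\tfrac12 C_1 B_q$. Integrating the remaining time variables over $[0,t]\subseteq[0,T]$ then collects the power $T^4$ and, since every factor is finite, yields both $H(t)\le C<+\infty$ and the explicit bound. I expect the delicate point to be the $\xi$-integration: the Gamma-function reduction generates a kernel that degenerates as $\tau\to s$, so the order in which the Cauchy--Schwarz step and the $\xi$-integration are carried out must be chosen so that the resulting time integrals stay convergent (this is presumably what the frozen constant $C_2$ in the exponential weight records), and the careful accounting of the powers of $T$ emitted at each stage is the main bookkeeping burden.
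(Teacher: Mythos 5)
Your outline reproduces the paper's own proof essentially step for step: the Duhamel representation of $\phi$ (the paper's (5.5)), a Cauchy--Schwarz step in time to extract powers of $T$, the trace embedding of Lemma 2 to produce the factor $B_qC_1$, the replacement of the kernel $e^{-2(\xi^2+\eta)(s-\tau)}$ by a frozen $e^{-C_2(\xi^2+\eta)}$, and the reduction of $\int_{\mathbb{R}}(\xi^2+\eta)\mu^2(\xi)e^{-C_2(\xi^2+\eta)}d\xi$ to Euler Gamma integrals. The only real deviation is at the start, and it is in your favor: you begin from the correct Duhamel formula (with $u_\tau$ inside) and integrate by parts to trade $u_\tau$ for $u$, whereas the paper simply writes $u$ in place of $u_t$ in (5.5). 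Note, however, that your integration by parts leaves a boundary contribution proportional to $u_0$ which vanishes only if $u_0=0$ on $\Gamma_0$; the paper makes the same silent assumption when deriving (4.3).

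The genuine difficulty is exactly the point you flag and postpone, and your proposed resolution does not work. There is no positive constant $C_2$ with $s-\tau\ge C_2/2$ for all $\tau\in(0,s)$, so dominating $e^{-2(\xi^2+\eta)(s-\tau)}$ by $e^{-C_2(\xi^2+\eta)}$ is illegitimate; this is precisely the paper's step from (5.9) to (5.10), which is equally unjustified. Moreover, reordering the Cauchy--Schwarz step and the $\xi$-integration cannot save it: with your unweighted Cauchy--Schwarz (pulling out the factor $s\le T$), performing the $\xi$-integration against the true kernel gives
\begin{equation*}
\int_{-\infty}^{+\infty}(\xi^2+\eta)\mu^2(\xi)e^{-2(\xi^2+\eta)(s-\tau)}d\xi
= e^{-2\eta(s-\tau)}\left[\Gamma(\alpha+1)\left(2(s-\tau)\right)^{-(1+\alpha)}
+\eta\,\Gamma(\alpha)\left(2(s-\tau)\right)^{-\alpha}\right],
\end{equation*}
and $(s-\tau)^{-(1+\alpha)}$ is not integrable near $\tau=s$ for any $\alpha\in(0,1)$, so the subsequent time integral diverges in either order. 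What does repair the argument is a \emph{weighted} Cauchy--Schwarz inequality, using the exponential itself as the weight:
\begin{equation*}
\left(\int_0^s e^{-(\xi^2+\eta)(s-\tau)}u\,d\tau\right)^2
\le \frac{1}{\xi^2+\eta}\int_0^s e^{-(\xi^2+\eta)(s-\tau)}u^2\,d\tau ,
\end{equation*}
so that after multiplication by $\xi^2+\eta$ the $\xi$-integral produces only $\Gamma(\alpha)\,e^{-\eta(s-\tau)}(s-\tau)^{-\alpha}$, which is integrable; combined with the trace embedding and the two remaining time integrations this gives a finite bound of order $B_q^{2}C_1\Gamma(\alpha)T^{2-\alpha}$, hence the qualitative conclusion (5.4). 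The explicit constant displayed in the lemma (with the factor $e^{-\eta C_2}$ and the bracket in $C_2$) inherits the illegitimate freezing step and cannot be obtained this way. In short: your plan coincides with the paper's proof, including its flaw; the ``delicate point'' you deferred is a real gap, and it is closed by the weighted estimate, not by a choice of ordering.
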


\begin{proof}
Using $(2.17)$ and $(2.18)$, we obtain
\begin{equation}
\phi (\xi ,t)=\int_{0}^{t}\mu (\xi )e^{-(\xi ^{2}+\eta )(t-s)}u(x,s)ds,\quad
\forall x\in \Gamma _{0}.  \label{5.5}
\end{equation}%
A H\"{o}lder inequality yields
\begin{equation}
\phi (\xi ,t)\leq \left( \int_{0}^{t}\mu ^{2}(\xi )e^{-2(\xi ^{2}+\eta
)(t-s)}ds\right) ^{\frac{1}{2}}\left( \int_{0}^{t}|u(x,s)|^{2}ds\right) ^{%
\frac{1}{2}},\quad \forall x\in \Gamma _{0}.  \label{5.6}
\end{equation}%
On the other hand,
\begin{equation}
\left( \int_{0}^{t}\phi (\xi ,s)ds\right) ^{2}\leq T\int_{0}^{t}|\phi (\xi
,s)|^{2}ds.  \label{5.7}
\end{equation}%
From $(5.6)$ in $(5.7)$, we obtain
\begin{equation}
\left( \int_{0}^{t}\phi (\xi ,s)ds\right) ^{2}\leq T\int_{0}^{t}\left[
\int_{0}^{s}\mu ^{2}(\xi )e^{-2(\xi ^{2}+\eta
)(s-z)}dz\int_{0}^{s}|u(x,z)|^{2}dz\right] ds.  \label{5.8}
\end{equation}%
Applying Lemma [2] leads to
\begin{equation}
\int_{\Gamma _{0}}\left( \int_{0}^{t}\phi (\xi ,s)ds\right) ^{2}d\rho \leq
B_{q}C_{1}T\int_{0}^{t}\left[ \int_{0}^{s}\mu ^{2}(\xi )e^{-2(\xi ^{2}+\eta
)(s-z)}dz\right] ds.  \label{5.9}
\end{equation}%
Since $z\in (0,s)$, we choose $\exists C_{2}\geq 0$ such that $s-z\geq \frac{%
C_{2}}{2}$ to term $(5.9)$ into
\begin{equation}
\int_{\Gamma _{0}}\left( \int_{0}^{t}\phi (\xi ,s)ds\right) ^{2}d\rho \leq
\frac{1}{2}B_{q}C_{1}T^{3}\mu ^{2}(\xi )e^{-C_{2}(\xi ^{2}+\eta )}.
\label{5.10}
\end{equation}%
A multiplication by $\xi ^{2}+\eta $ followed by integration over $%
(0,t)\times (-\infty ,+\infty )$, yields
\begin{equation}
\begin{split}
H(t)& \leq C_{1}B_{q}e^{-\eta C_{2}}T^{3}\int_{0}^{t}\left[
\int_{0}^{+\infty }\xi ^{2\alpha +1}e^{-C_{2}\xi ^{2}}d\xi \right] ds \\
& +C_{1}B_{q}e^{-\eta C_{2}}\eta T^{3}\int_{0}^{t}\left[ \int_{0}^{+\infty
}\xi ^{2\alpha -1}e^{-C_{2}\xi ^{2}}d\xi \right] ds.
\end{split}
\label{5.11}
\end{equation}%
Then
\begin{equation}
\begin{split}
H(t)& \leq \frac{1}{2}C_{1}B_{q}e^{-\eta C_{2}}C_{2}^{2\alpha
-1}T^{3}\int_{0}^{t}\left[ \int_{0}^{+\infty }y^{\alpha }e^{-y}dy\right] ds
\\
& +\frac{1}{2}C_{1}B_{q}e^{-\eta C_{2}}C_{2}^{3-2\alpha }\eta
T^{3}\int_{0}^{t}\left[ \int_{0}^{+\infty }y^{\alpha -1}e^{-y}dy\right] ds.
\end{split}
\label{5.12}
\end{equation}%
Applying a special integral ( Euler gamma function), we obtain
\begin{equation}
H(t)\leq \frac{1}{2}C_{1}B_{q}e^{-\eta C_{2}}\left[ C_{2}^{2\alpha -1}\alpha
+C_{2}^{3-2\alpha }\eta \right] \Gamma (\alpha )T^{4}.  \label{5.13}
\end{equation}
\end{proof}

\begin{lemma}
Suppose $p>2$, then
\begin{equation}
\begin{split}
& F^{\prime \prime }(t)\geq (p+2)\Arrowvert u_{t}\Arrowvert_{2}^{2} \\
& +2p\left\{ -E(0)+a\int_{0}^{t}\Arrowvert u_{s}\Arrowvert_{2}^{2}ds-\frac{1%
}{2}\left( 1-\int_{0}^{t}g(s)ds\right) \Arrowvert\nabla u\Arrowvert_{2}^{2}-%
\frac{1}{2}\left( g\circ \nabla u\right) \left( t\right) \right. . \\
& \left. +b_{1}\int_{0}^{t}\int_{\Gamma _{0}}\int_{-\infty }^{+\infty }(\xi
^{2}+\eta )|\phi (\xi ,s)|^{2}d\xi d\rho ds\right\}
\end{split}
\label{5.14}
\end{equation}
\end{lemma}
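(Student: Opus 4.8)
The plan is to differentiate $F$ twice, to use the first equation of $(\ref{2.21})$ to eliminate $u_{tt}$ at the second stage, and to recognise the resulting expression as $(p+2)\Vert u_{t}\Vert_{2}^{2}-2pE(t)$, the bracketed quantity in $(\ref{5.14})$ being nothing but $-E(t)$ by the integrated identity $(\ref{5.1})$. First I would compute $F'(t)$ from $(\ref{5.2})$ term by term: $\tfrac{d}{dt}\Vert u\Vert_{2}^{2}=2\int_{\Omega}uu_{t}\,dx$, the integral $a\int_{0}^{t}\Vert u\Vert_{2}^{2}\,ds$ contributes $a\Vert u\Vert_{2}^{2}$, the two viscoelastic terms are differentiated by the Leibniz rule, and $b_{1}H(t)$ contributes $b_{1}\psi_{2}(t)$ since $H'(t)=\psi_{2}(t)$ by $(\ref{5.3})$.

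Differentiating once more, the decisive term is $\tfrac{d}{dt}\bigl(2\int_{\Omega}uu_{t}\,dx\bigr)=2\Vert u_{t}\Vert_{2}^{2}+2\int_{\Omega}uu_{tt}\,dx$. Into $2\int_{\Omega}uu_{tt}\,dx$ I substitute $u_{tt}=\Delta u-au_{t}-\int_{0}^{t}g(t-s)\Delta u(s)\,ds+|u|^{p-2}u$ and integrate by parts over $\Omega$, using $u=0$ on $\Gamma_{1}$ and $\tfrac{\partial u}{\partial\nu}=-b_{1}\int\phi\mu\,d\xi$ on $\Gamma_{0}$. This turns $2\int_{\Omega}u\Delta u\,dx$ into $-2\Vert\nabla u\Vert_{2}^{2}-2b_{1}\int_{\Gamma_{0}}u\int\phi\mu\,d\xi d\rho$; the $|u|^{p-2}u$ term gives $2\Vert u\Vert_{p}^{p}$; and the term $-2a\int_{\Omega}uu_{t}$ cancels the $2a\int_{\Omega}uu_{t}$ produced by differentiating $a\Vert u\Vert_{2}^{2}$. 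The memory term, after integration by parts, produces the interior convolution $2\int_{0}^{t}g(t-s)\int_{\Omega}\nabla u(t)\cdot\nabla u(s)\,dx\,ds$. The crucial cancellation is that $b_{1}\psi_{2}'(t)=2b_{1}\bigl(\int_{\Gamma_{0}}u\int\phi\mu\,d\xi d\rho-\int_{\Gamma_{0}}\int|\phi|^{2}\,d\xi d\rho\bigr)$, obtained by differentiating $\psi_{2}$ under the integral and invoking Lemma 8; the boundary cross-term is thereby annihilated and only $-2b_{1}\int_{\Gamma_{0}}\int|\phi|^{2}\,d\xi d\rho$ remains.

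It then remains to absorb the viscoelastic contributions. Using the energy $(\ref{3.1})$ to write $2\Vert u\Vert_{p}^{p}=p\Vert u_{t}\Vert_{2}^{2}+2pQ(t)-2pE(t)+pb_{1}\int_{\Gamma_{0}}\int|\phi|^{2}$, with $Q(t)=\tfrac12\bigl(1-\int_{0}^{t}g\bigr)\Vert\nabla u\Vert_{2}^{2}+\tfrac12(g\circ\nabla u)(t)$, the term $-2pE(t)$ appears with the correct coefficient while $(p+2)\Vert u_{t}\Vert_{2}^{2}$ assembles from the $2\Vert u_{t}\Vert_{2}^{2}$ and the $p\Vert u_{t}\Vert_{2}^{2}$ produced by this substitution. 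The target $(\ref{5.14})$ then reduces, via $(\ref{5.1})$ which identifies the braces with $-E(t)$, to the nonnegativity of the leftover block of memory terms, namely the interior convolution $2\int_{0}^{t}g(t-s)\int_{\Omega}\nabla u(t)\cdot\nabla u(s)\,dx\,ds$ measured against $-2\Vert\nabla u\Vert_{2}^{2}$ together with the surviving derivatives of $Q$. I expect this to be the main obstacle: one splits $\nabla u(s)=\nabla u(t)-\bigl(\nabla u(t)-\nabla u(s)\bigr)$ and uses the sign hypotheses $(G_{1})$ and $(G_{2})$, i.e. $g\ge 0$, $g'\le 0$ and $1-\int_{0}^{\infty}g=l>0$, together with Young's inequality, to dominate the cross-terms and discard sign-favorable quantities, thus passing from an equality to the inequality $(\ref{5.14})$; the boundedness of the boundary memory supplied by Lemma 10 is what keeps these manipulations finite on $[0,T)$.
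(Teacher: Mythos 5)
Your global strategy is the same as the paper's own proof of this lemma: differentiate $F$ twice, eliminate $u_{tt}$ via the first equation of (2.21) together with the divergence theorem and the boundary conditions, use Lemma 8 for the $\phi$-terms, substitute for $2\Vert u\Vert_{p}^{p}$ through the energy (3.1) combined with the integrated identity (5.1), and finally discard what carries the factor $p-2>0$. Your boundary bookkeeping is in fact cleaner than the paper's (5.16): the cancellation of the term $-2b_{1}\int_{\Gamma_{0}}u\int_{-\infty}^{+\infty}\phi\mu\,d\xi\,d\rho$ coming from integration by parts against its opposite produced inside $b_{1}\psi_{2}'(t)$ via Lemma 8, leaving only $-2b_{1}\int_{\Gamma_{0}}\int_{-\infty}^{+\infty}|\phi|^{2}\,d\xi\,d\rho$, is correct.

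The genuine gap is exactly the step you defer and call the main obstacle; it is not closable with the tools you list. First, if the viscoelastic part of $F$ is differentiated honestly by the Leibniz rule, as you propose, then $F'(t)$ contains the cross terms $-\int_{\Omega}\nabla u\cdot\nabla u_{t}\,dx+\int_{0}^{t}g(t-s)\int_{\Omega}\nabla u(s)\cdot\nabla u_{t}(t)\,dx\,ds$, and differentiating these once more produces $-\Vert\nabla u_{t}\Vert_{2}^{2}$ and terms containing $\nabla u_{tt}$; nothing in $(G_{1})$--$(G_{2})$, Young's inequality, or Lemma 10 controls them ($-\Vert\nabla u_{t}\Vert_{2}^{2}$ even has the wrong sign), and Lemma 10 is in any case conditional on $\Vert\nabla u\Vert_{2}^{2}$ being bounded, which cannot be presupposed inside a blow-up argument. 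Second, even ignoring those terms, the block you must show nonnegative, $2pQ(t)-2\Vert\nabla u\Vert_{2}^{2}+2\int_{0}^{t}g(t-s)\int_{\Omega}\nabla u(t)\cdot\nabla u(s)\,dx\,ds$, does not succumb to your splitting: writing $\nabla u(s)=\nabla u(t)-\bigl(\nabla u(t)-\nabla u(s)\bigr)$ and applying Young's inequality with parameter $\delta$ leaves the deficit $-\tfrac{1}{\delta}(g\circ\nabla u)(t)$, which can only be absorbed by the $+p(g\circ\nabla u)(t)$ inside $2pQ$ if $\delta\geq 1/p$, and then the coefficient of $\Vert\nabla u\Vert_{2}^{2}$ is $p-2-\bigl(p-2+\tfrac{1}{p}\bigr)\int_{0}^{t}g$, which is nonnegative only under the smallness condition $\int_{0}^{\infty}g\leq p(p-2)/(p-1)^{2}$; hypothesis $(G_{1})$, which asks only $l>0$, does not supply this. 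Finally, you cannot repair the sketch by consulting the paper: its own computations (5.15)--(5.16) differentiate the viscoelastic part of $F$ as if it contributed only $\tfrac{1}{2}g(t)\Vert\nabla u\Vert_{2}^{2}-\tfrac{1}{2}(g'\circ\nabla u)(t)$, and the convolution and boundary terms present in (5.16) disappear between (5.16) and (5.19) without justification, so the obstacle you correctly isolated is erased there rather than overcome.
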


\begin{proof}
We differentiate with respect to $t$ in $(5.2)$, then we get
\begin{equation}
\begin{split}
F^{\prime }(t)& =2\int_{\Omega }uu_{t}dx+a\Arrowvert u\Arrowvert_{2}^{2} \\
& +\frac{1}{2}g\left( t\right) \Arrowvert\nabla u\Arrowvert_{2}^{2}-\frac{1}{%
2}\left( g^{\prime }\circ \nabla u\right) \left( t\right) \\
& +2b_{1}\int_{0}^{t}\int_{\Gamma _{0}}\int_{-\infty }^{+\infty }(\xi
^{2}+\eta )\phi (\xi ,s)\int_{0}^{s}\phi (\xi ,z)dzd\xi d\rho ds.
\end{split}
\label{5.15}
\end{equation}%
Using divergence theorem and (\ref{2.21}), we obtain
\begin{equation}
\begin{split}
F^{\prime \prime }(t)& =2\Arrowvert u_{t}\Arrowvert_{2}^{2}-2\int_{\Omega
}\nabla u\int_{0}^{t}g\left( t-s\right) \nabla u\left( s\right) dsdx \\
& +2\Arrowvert u\Arrowvert_{p}^{p}+2b_{1}\int_{\Gamma
_{0}}u(x,t)\int_{-\infty }^{+\infty }\mu (\xi )\phi (\xi ,t)d\xi d\rho \\
& +2b_{1}\int_{\Gamma _{0}}\int_{-\infty }^{+\infty }(\xi ^{2}+\eta )\phi
(\xi ,t)\int_{0}^{t}\phi (\xi ,s)dsd\xi d\rho .
\end{split}
\label{5.16}
\end{equation}%
By definition of of energy functional in $(3.1)$ and relation $(5.1)$, we
give the following evaluation of the third term of (\ref{5.16})
\begin{equation}
\begin{split}
2\Arrowvert u\Arrowvert_{p}^{p}& =p\Arrowvert u_{t}\Arrowvert_{2}^{2}+p%
\Arrowvert\nabla u\Arrowvert_{2}^{2}+pb_{1}\int_{\Gamma _{0}}\int_{-\infty
}^{+\infty }|\phi (\xi ,t)|^{2}d\xi d\rho -2pE(0) \\
& +2p\left[ a\int_{0}^{t}\Arrowvert u_{s}\Arrowvert_{2}^{2}ds-\frac{1}{2}%
\left( 1-\int_{0}^{t}g(s)ds\right) \Arrowvert\nabla u\Arrowvert_{2}^{2}-%
\frac{1}{2}\left( g\circ \nabla u\right) \left( t\right) \right. \\
& \left. +b_{1}\int_{0}^{t}\int_{\Gamma _{0}}\int_{-\infty }^{+\infty }(\xi
^{2}+\eta )|\phi (\xi ,s)|^{2}d\xi d\rho ds\right].
\end{split}
\label{5.17}
\end{equation}%
We can also estimate the last term of (\ref{5.16}) using Lemma [8]:
\begin{equation}
\begin{split}
& \int_{\Gamma _{0}}\int_{-\infty }^{+\infty }(\xi ^{2}+\eta )\phi (\xi
,t)\int_{0}^{t}\phi (\xi ,s)dsd\xi d\rho = \\
& \int_{\Gamma _{0}}u(x,t)\int_{-\infty }^{+\infty }\phi (\xi ,t)\mu (\xi
)d\xi d\rho -\int_{\Gamma _{0}}\int_{-\infty }^{+\infty }|\phi (\xi
,t)|^{2}d\xi d\rho .
\end{split}
\label{5.18}
\end{equation}%
From $(5.17)$, $(5.18)$ and $(5.16)$, we get
\begin{equation}
\begin{split}
& F^{\prime \prime }(t)\geq (p+2)\Arrowvert u_{t}\Arrowvert_{2}^{2}+(p-2)%
\Arrowvert\nabla u\Arrowvert_{2}^{2}+b_{1}(p-2)\int_{\Gamma
_{0}}\int_{-\infty }^{+\infty }|\phi (\xi ,t)|^{2}d\xi d\rho \\
& +2p\left[ -E(0)+a\int_{0}^{t}\Arrowvert u_{s}\Arrowvert_{2}^{2}ds-\frac{1}{%
2}(1-\int_{0}^{t}g(s)ds)\Arrowvert\nabla u\Arrowvert_{2}^{2}-\frac{1}{2}%
\left( g\circ \nabla u\right) \left( t\right) \right. . \\
& \left. +b_{1}\int_{0}^{t}\int_{\Gamma _{0}}\int_{-\infty }^{+\infty }(\xi
^{2}+\eta )|\phi (\xi ,s)|^{2}d\xi d\rho ds\right].
\end{split}
\label{5.19}
\end{equation}%
Taking $p>2$, we obtain the needed estimation
\begin{equation*}
\begin{split}
& F^{\prime \prime }(t)\geq (p+2)\Arrowvert u_{t}\Arrowvert_{2}^{2} \\
& +2p\left\{ -E(0)+a\int_{0}^{t}\Arrowvert u_{s}\Arrowvert_{2}^{2}ds-\frac{1%
}{2}\left( 1-\int_{0}^{t}g(s)ds\right) \Arrowvert\nabla u\Arrowvert_{2}^{2}-%
\frac{1}{2}\left( g\circ \nabla u\right) \left( t\right) \right. . \\
& \left. +b_{1}\int_{0}^{t}\int_{\Gamma _{0}}\int_{-\infty }^{+\infty }(\xi
^{2}+\eta )|\phi (\xi ,s)|^{2}d\xi d\rho ds\right\}
\end{split}%
\end{equation*}
\end{proof}

\begin{lemma}
Suppose $p>2$ and that either one of the next assumptions is verified
\newline
(i) $E(0)<0$.\newline
(ii) $E(0)=0$, and
\begin{equation}
F^{\prime }(0)>a\Arrowvert u_{0}\Arrowvert_{2}^{2}.  \label{5.20}
\end{equation}%
(iii) $E(0)>0$, and
\begin{equation}
F^{\prime }(0)>\left[ F(0)+l_{0}\right] +a\Arrowvert u_{0}\Arrowvert_{2}^{2},
\label{5.21}
\end{equation}%
where
\begin{equation*}
r=p-2\sqrt{p^{2}-p}
\end{equation*}%
and
\begin{equation}
l_{0}=a\Arrowvert u_{0}\Arrowvert_{2}^{2}-2E(0).  \label{5.22}
\end{equation}%
Then $F^{\prime }(t)>a\Arrowvert u_{0}\Arrowvert_{2}^{2}$, for $t>t_{0},$
where
\begin{equation}
t^{\ast }>\max \left\{ 0,\frac{F^{\prime }(0)-a\Arrowvert u_{0}\Arrowvert%
_{2}^{2}]}{2pE(0)}\right\} ,  \label{5.23}
\end{equation}%
where $t_{0}=t^{\ast }$ in case(i), and $t_{0}=0$ in case(ii) and (iii)
\end{lemma}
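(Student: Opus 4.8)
The proof is driven by the second-order inequality (5.14) just established for $F$; the target $F'(t)>a\Arrowvert u_0\Arrowvert_2^2$ is a lower bound on $F'$, which I would obtain by integrating a lower bound on $F''$. So the first move is to read off the sign of the right-hand side of (5.14). Grouping it as $F''(t)\geq -2pE(0)+(p+2)\Arrowvert u_t\Arrowvert_2^2+2p\,R(t)$, where $R(t)$ collects the internal-damping integral $a\int_0^t\Arrowvert u_s\Arrowvert_2^2\,ds$, the fractional dissipation $b_1\int_0^t\int_{\Gamma_0}\int_{-\infty}^{+\infty}(\xi^2+\eta)|\phi|^2\,d\xi d\rho ds$, and the memory terms $-\tfrac12(1-\int_0^t g)\Arrowvert\nabla u\Arrowvert_2^2-\tfrac12(g\circ\nabla u)(t)$, I note that the first two pieces of $R(t)$ are nonnegative (the fractional one because $\xi^2+\eta\geq0$) while the kernel pieces carry the wrong sign. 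Showing that $R(t)\geq0$, so that $F''(t)\geq -2pE(0)$, is the first key step and, because of those kernel terms, the main analytic obstacle.

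Granting $F''(t)\geq -2pE(0)$, cases (i) and (ii) follow from a single integration. In case (i), $E(0)<0$ so $-2pE(0)>0$, and integrating from $0$ gives the strictly increasing affine minorant $F'(t)\geq F'(0)-2pE(0)t$; solving $F'(0)-2pE(0)t>a\Arrowvert u_0\Arrowvert_2^2$ produces exactly the threshold $t^{\ast}=\max\{0,(F'(0)-a\Arrowvert u_0\Arrowvert_2^2)/(2pE(0))\}$ of (5.23), beyond which $F'(t)>a\Arrowvert u_0\Arrowvert_2^2$, so $t_0=t^{\ast}$. In case (ii), $E(0)=0$ gives $F''(t)\geq0$, whence $F'$ is nondecreasing and $F'(t)\geq F'(0)>a\Arrowvert u_0\Arrowvert_2^2$ for all $t\geq0=t_0$, directly from the hypothesis (5.20).

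Case (iii) is the substantive one, since for $E(0)>0$ the minorant $F'(t)\geq F'(0)-2pE(0)t$ is decreasing and cannot push $F'$ upward. Instead I would combine (5.14) with the definition (5.2) of $F$ and the expression (5.15) for $F'$ to cast $F$ into the form (2.5) required by Lemma 3. The decisive choice is $\delta=(p-2)/4>0$, giving $4(\delta+1)=p+2$: the purely $u$-dependent parts of $F''$, of $-(p+2)F'$ and of $(p+2)F$ then combine into a perfect square,
\[
(p+2)\Arrowvert u_t\Arrowvert_2^2-2(p+2)\int_\Omega uu_t\,dx+(p+2)\Arrowvert u\Arrowvert_2^2=(p+2)\Arrowvert u_t-u\Arrowvert_2^2\geq0 ,
\]
and it then remains to verify that the leftover memory, fractional and $-2pE(0)$ terms also sum nonnegatively, so that $F''-4(\delta+1)F'+4(\delta+1)F\geq0$ on $[0,T)$.

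With that differential inequality, Lemma 3 applies with $r_2=2(\delta+1)-2\sqrt{(\delta+1)\delta}$: its hypothesis (2.6), namely $F'(0)>r_2F(0)+l_0$, is what (5.21)--(5.22) encode (after noting $F\geq0$, as the lemma requires), and it returns a positive floor for $F'$. Tracking this floor past $a\Arrowvert u_0\Arrowvert_2^2$, using that $F''$ in fact exceeds the model ODE by the positive-definite remainder, is what yields $F'(t)>a\Arrowvert u_0\Arrowvert_2^2$ for all $t>0=t_0$, the strict inequality being inherited from the strict hypothesis. The two places where I expect to have to work are the kernel estimate behind $R(t)\geq0$, where $(G_1)$, $(G_2)$ and $\xi^2+\eta\geq0$ must be used to dominate the negative memory contribution, and, in case (iii), the precise matching of the leftover terms to the constants $r_2$ and $l_0$; the perfect-square identity above is exactly what makes $\delta=(p-2)/4$ the correct choice and is the structural heart of that case.
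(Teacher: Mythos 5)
Your proposal reproduces the architecture of the paper's proof. Cases (i) and (ii) are handled exactly as in the paper: read $F''(t)\geq -2pE(0)$ off (5.14), integrate, and invoke (5.20); your threshold coincides with (5.23). In case (iii) the paper also reduces to Lemma 3 with $B=F+l_{0}$, but by a slightly different reduction than yours: it first derives the auxiliary bound (5.29),
\begin{equation*}
F'(t)\leq F(t)+\Vert u_{t}\Vert_{2}^{2}+a\int_{0}^{t}\Vert u_{s}\Vert_{2}^{2}\,ds+b_{1}\int_{0}^{t}\int_{\Gamma_{0}}\int_{-\infty}^{+\infty}(\xi^{2}+\eta)|\phi(\xi,s)|^{2}\,d\xi\,d\rho\,ds+a\Vert u_{0}\Vert_{2}^{2},
\end{equation*}
using Young's inequality ((5.25), (5.27)), then adds $p$ times it to (5.14) and lands on $F''-pF'+pF+pl_{0}\geq 0$ ((5.30)--(5.32)); that is, it works with coefficient $p$, hence effectively $\delta=(p-4)/4$ (this is the $\gamma_{1}$ of the blow-up theorem; the phrase ``for $p=\delta+1$'' there is a misprint). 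Your reduction uses coefficient $p+2$, i.e.\ $\delta=(p-2)/4$, and gets the key cancellation from the perfect square $(p+2)\Vert u_{t}-u\Vert_{2}^{2}$. The two mechanisms are the same algebra --- the paper's Young inequality applied to $2\int_{\Omega}uu_{t}\,dx$ \emph{is} your square --- but your choice of $\delta$ is the better-tuned one: Lemma 3 needs $\delta>0$, which for the paper's coefficient forces $p>4$, whereas your $\delta=(p-2)/4$ is positive for every $p>2$, the actual hypothesis of the statement. On that point your route is a genuine, and preferable, variant.

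The caveat is that the step you flag as ``the main analytic obstacle'' is a real gap, and the paper does not close it either. Under $(G_{1})$--$(G_{2})$ every memory term in play is nonpositive: $-\tfrac12\bigl(1-\int_{0}^{t}g\bigr)\Vert\nabla u\Vert_{2}^{2}\leq 0$, $-\tfrac12(g\circ\nabla u)(t)\leq 0$, and, after multiplying (5.15) by $-(p+2)$, so are $-\tfrac{p+2}{2}\,g(t)\Vert\nabla u\Vert_{2}^{2}$ and $+\tfrac{p+2}{2}(g'\circ\nabla u)(t)$. These terms cannot ``sum nonnegatively,'' and your $R(t)\geq 0$ cannot be established in general; they could only be absorbed under a smallness condition on the kernel $g$ relative to the dissipative terms, which neither you nor the paper assumes. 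What the paper actually does is drop them: in passing from (5.14) to $F''\geq -2pE(0)$ in case (i), and from (5.14) together with (5.29) to (5.30) in case (iii), the nonpositive terms $-p\bigl(1-\int_{0}^{t}g\bigr)\Vert\nabla u\Vert_{2}^{2}-p(g\circ\nabla u)(t)$ silently disappear from the lower-bound side, which is not a legitimate inequality manipulation. (The nonnegativity of $B=F+l_{0}$, which Lemma 3 also requires and which you at least note, is likewise left unverified in the paper.) So your sketch is faithful to the published argument, and the obstacle you honestly leave open is precisely the point at which that argument is itself unjustified.
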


\begin{proof}
(i) Case of $E(0)<0$. \newline
From $(5.14)$, we have
\begin{equation*}
F^{^{\prime \prime }}(t)\geq -2pE(0),
\end{equation*}%
which clearly leads to :
\begin{equation*}
F^{^{\prime }}(t)\geq F^{^{\prime }}(0)-2pE(0)t.
\end{equation*}%
Then
\begin{equation*}
F^{^{\prime }}(t)>a\Arrowvert u_{0}\Arrowvert_{2}^{2},\ \forall t\geq
t^{\ast },
\end{equation*}%
where $t^{\ast }$ as given in $(5.23)$.\newline
(ii) Case $E(0)=0$. \newline
Using (5.14) we got
\begin{equation*}
F^{\prime \prime }(t)\geq 0,\quad \forall t\geq 0.
\end{equation*}%
Thus
\begin{equation*}
F^{\prime }(t)\geq F^{\prime }(0),\quad \forall t\geq 0.
\end{equation*}%
\newline
Then, by $(5.20)$
\begin{equation*}
F^{^{\prime }}(t)>a\Arrowvert u_{0}\Arrowvert_{2}^{2},\ \forall t\geq 0.
\end{equation*}%
(iii) Case $E(0)>0$.\newline
The proof of this case consist of getting to a differential inequality: $%
B^{\prime \prime }(t)-pB^{\prime }(t)+pB(t)\geq 0$ pursued by a use of Lemma
3. Indeed, from $(5.15)$ we have
\begin{equation}
\begin{split}
& F^{\prime }(t)=2\int_{\Omega }uu_{t}dx+a\Arrowvert u\Arrowvert_{2}^{2} \\
& +\frac{1}{2}g\left( t\right) \Arrowvert\nabla u\Arrowvert_{2}^{2}-\frac{1}{%
2}\left( g^{\prime }\circ \nabla u\right) \left( t\right) \\
& +2b_{1}\int_{0}^{t}\int_{\Gamma _{0}}\int_{-\infty }^{+\infty }(\xi
^{2}+\eta )\phi (\xi ,s)\int_{0}^{s}\phi (\xi ,z)dzd\xi d\rho ds.
\end{split}
\label{5.24}
\end{equation}%
\newline
Or, the last term in $(5.24)$ can be estimated using a Young's inequality
\begin{equation}
\begin{split}
& \int_{0}^{t}\int_{\Gamma _{0}}\int_{-\infty }^{+\infty }(\xi ^{2}+\eta
)\phi (\xi ,s)\int_{0}^{s}\phi (\xi ,z)dzd\xi d\rho ds \\
& \leq \frac{1}{2}\int_{0}^{t}\int_{\Gamma _{0}}\int_{-\infty }^{+\infty
}(\xi ^{2}+\eta )|\phi (\xi ,s)|^{2}d\xi d\rho ds \\
& +\frac{1}{2}\int_{0}^{t}\int_{\Gamma _{0}}\int_{-\infty }^{+\infty }(\xi
^{2}+\eta )\left( \int_{0}^{s}\phi (\xi ,z)dz\right) ^{2}d\xi d\rho ds
\end{split}
\label{5.25}
\end{equation}%
On the other hand
\begin{equation}
2\int_{0}^{t}\int_{\Omega }u_{s}udxds=\int_{0}^{t}\frac{d}{ds}\Arrowvert %
u_{s}\Arrowvert_{2}^{2}ds=\Arrowvert u\Arrowvert_{2}^{2}-\Arrowvert u_{0}%
\Arrowvert_{2}^{2}.  \label{5.26}
\end{equation}%
By Young's inequality, we get
\begin{equation}
\Arrowvert u\Arrowvert_{2}^{2}\leq \int_{0}^{t}\Arrowvert u_{s}\Arrowvert%
_{2}^{2}ds+\int_{0}^{t}\Arrowvert u\Arrowvert_{2}^{2}ds+\Arrowvert u_{0}%
\Arrowvert_{2}^{2}.  \label{5.27}
\end{equation}%
Now, we remake $(5.24)$ using $(5.25)$ and $(5.27)$

\begin{equation}
\begin{split}
F^{\prime }(t)& \leq \Arrowvert u\Arrowvert_{2}^{2}+\Arrowvert u_{t}%
\Arrowvert_{2}^{2}+a\int_{0}^{t}\Arrowvert u_{s}\Arrowvert%
_{2}^{2}ds+a\int_{0}^{t}\Arrowvert u\Arrowvert_{2}^{2}ds+a\Arrowvert u_{0}%
\Arrowvert_{2}^{2} \\
& -\frac{1}{2}\left( 1-\int_{0}^{t}g(s)ds\right) \Arrowvert\nabla u\Arrowvert%
_{2}^{2}-\frac{1}{2}\left( g\circ \nabla u\right) \left( t\right)
+b_{1}\int_{0}^{t}\int_{\Gamma _{0}}\int_{-\infty }^{+\infty }(\xi ^{2}+\eta
)|\phi (\xi ,s)|^{2}d\xi d\rho ds \\
& +b_{1}\int_{0}^{t}\int_{\Gamma _{0}}\int_{-\infty }^{+\infty }(\xi
^{2}+\eta )\left( \int_{0}^{s}\phi (\xi ,z)dz\right) ^{2}d\xi d\rho ds.
\end{split}
\label{5.28}
\end{equation}%
From definition of $F$ in $(5.2)$, inequality $(5.28)$ also becomes
\begin{equation}
\begin{split}
F^{\prime }(t)\leq & F(t)+\Arrowvert u_{t}\Arrowvert_{2}^{2}+b_{1}%
\int_{0}^{t}\int_{\Gamma _{0}}\int_{-\infty }^{+\infty }(\xi ^{2}+\eta
)|\phi (\xi ,s)|^{2}d\xi d\rho ds \\
& +a\int_{0}^{t}\Arrowvert u_{s}\Arrowvert_{2}^{2}ds+a\Arrowvert u_{0}%
\Arrowvert_{2}^{2}.
\end{split}
\label{5.29}
\end{equation}%
Thus by $(5.14)$, we get
\begin{equation}
\begin{split}
F^{\prime \prime }(t)-p\left\{ F^{\prime }(t)-F(t)\right\} & \geq 2%
\Arrowvert u_{t}\Arrowvert_{2}^{2}+ap\int_{0}^{t}\Arrowvert u_{s}\Arrowvert%
_{2}^{2}ds-pa\Arrowvert u_{0}\Arrowvert_{2}^{2}-2pE(0) \\
& +pb_{1}\int_{0}^{t}\int_{\Gamma _{0}}\int_{-\infty }^{+\infty }(\xi
^{2}+\eta )|\phi (\xi ,s)|^{2}d\xi d\rho ds.
\end{split}
\label{5.30}
\end{equation}%
Hence
\begin{equation}
F^{\prime \prime }(t)-pF^{\prime }(t)+pF(t)+pl_{0}\geq 0,  \label{5.31}
\end{equation}%
where
\begin{equation*}
l_{0}=a\Arrowvert u_{0}\Arrowvert_{2}^{2}-2E(0).
\end{equation*}%
Posing
\begin{equation*}
B(t)=F(t)+l_{0}.
\end{equation*}%
Leads to
\begin{equation}
B^{\prime \prime }(t)-pB^{\prime }(t)+pB(t)\geq 0.  \label{5.32}
\end{equation}%
By Lemma (3) and for $p=\delta +1$, we conclude that if
\begin{equation}
B^{\prime }(t)>(p-2\sqrt{p^{2}-p})B(0)+a\Arrowvert u_{0}\Arrowvert_{2}^{2}.
\label{5.33}
\end{equation}%
Then
\begin{equation*}
F^{\prime }(t)=B^{\prime }(t)>a\Arrowvert u_{0}\Arrowvert_{2}^{2}\quad
\forall t\geq 0.
\end{equation*}
\end{proof}

\begin{theorem}
Suppose $p>2$ and that either one of the next assumptions is verified
\newline
(i) $E(0)<0$. \newline
(ii) $E(0)=0$ and (5.20) holds.\newline
(iii) $0<E(0)<\frac{(2p-4)\left( F^{\prime }(t_{0})-a\Arrowvert u_{0}%
\Arrowvert_{2}^{2}\right) ^{2}J(t_{0})^{\frac{1}{\gamma _{1}}}}{16p}$ and
(5.21) holds. \newline
Then, In the sense of Definition 1, the solution $(u,\phi )$ blows up at
finite time $T^{\ast }$.\newline
For case (i):
\begin{equation}
T^{\ast }\leq t_{0}-\frac{J(t_{0})}{J^{\prime }(t_{0})}.  \label{5.34}
\end{equation}%
Moreover, if $J(t_{0})<\min \left\{ 1,\sqrt{\frac{\sigma }{-b}}\right\} ,$
we get
\begin{equation}
T^{\ast }\leq t_{0}+\frac{1}{\sqrt{-b}}\ln \frac{\sqrt{\frac{\sigma }{-b}}}{%
\sqrt{\frac{\sigma }{-b}}-J(t_{0})}.  \label{5.35}
\end{equation}%
For case (ii): we get either
\begin{equation}
T^{\ast }\leq t_{0}-\frac{J(t_{0})}{J^{\prime }(t_{0})},  \label{5.36}
\end{equation}%
or
\begin{equation}
T^{\ast }\leq t_{0}+\frac{J(t_{0})}{J^{\prime }(t_{0})}.  \label{5.37}
\end{equation}%
For case (iii):
\begin{equation}
T^{\ast }\leq \frac{J(t_{0})}{\sqrt{\sigma }},  \label{5.38}
\end{equation}%
or else
\begin{equation}
T^{\ast }\leq t_{0}+2^{\frac{3\gamma _{1}+1}{2\gamma _{1}}}\frac{\gamma _{1}c%
}{\sqrt{\sigma }}\{1-[1-cJ(t_{0})]^{\frac{1}{2\gamma _{1}}}\},  \label{5.39}
\end{equation}%
where $\gamma_{1}=\frac{p-4}{4}$, $c=(\frac{b}{\sigma })^{\frac{\gamma _{1}}{%
2+\gamma _{1}}}$, $J(t)$, $b$ and $\sigma $ are as in $(5.40)$ and $(5.54)$
respectively.\newline
\end{theorem}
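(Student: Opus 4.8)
The plan is to run a Levine-type concavity argument on the functional $F$ of $(5.2)$, feed the resulting differential inequality into Lemma 4, and then convert the blow-up of $F$ into blow-up of $\|\nabla u\|_2$ in the sense of Definition 1 via a contradiction with Lemma 10.

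First I would collect the output of Lemma 12: under each of the three hypotheses on $E(0)$ (with $(5.20)$, $(5.21)$ and $(5.23)$) one has $F'(t)>a\|u_0\|_2^2\ge 0$ for $t\ge t_0$, so $F$ is strictly increasing and, after checking $F(t_0)>0$, strictly positive on $[t_0,\infty)$. This legitimizes setting $J(t)=F(t)^{-\gamma_1}$ with $\gamma_1=\frac{p-4}{4}$, for which $J'(t)=-\gamma_1 F^{-\gamma_1-1}F'(t)\le 0$; thus $J$ is positive and non-increasing, exactly as Lemma 4 requires.

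The heart of the matter is to produce the inequality $(5.54)$, namely $J'(t)^2\ge \sigma+bJ(t)^{2+1/\gamma_1}$ with $\sigma>0$. Its engine is the concavity estimate $F F''-(\gamma_1+1)(F')^2\ge 0$, equivalent to $J''\le 0$. To get it I would begin from the expression $(5.15)$ for $F'$ and isolate its two genuinely bilinear pieces, $2\int_\Omega u u_t\,dx$ and the boundary term $2b_1\int_0^t\!\int_{\Gamma_0}\!\int(\xi^2+\eta)\,\phi\big(\int_0^s\phi\,dz\big)$, estimating each by Cauchy--Schwarz: $(\int_\Omega u u_t)^2\le \|u\|_2^2\|u_t\|_2^2$, and the square of the boundary term by the product of $\int_0^t\!\int_{\Gamma_0}\!\int(\xi^2+\eta)|\phi|^2$ (which sits inside $F''$ through Lemma 11) with $H(t)$ (which sits inside $F$ through $(5.3)$). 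Matching the Lemma 11 coefficients against those of $F$ uses the slack in $p+2\ge 4(\gamma_1+1)=p$ and $2p\ge 4(\gamma_1+1)=p$, so both cross terms are dominated by an $F$-factor times an $F''$-factor; the damping terms $a\|u\|_2^2$ and $a\int_0^t\|u\|_2^2$ are reabsorbed through the identity $(5.26)$ and Young's inequality, which is exactly what the shift by $a\|u_0\|_2^2$ is for. The constant floor $F''\ge -2pE(0)$ of Lemma 11, together with the monotonicity of $J'$ (giving $(J')^2\ge (J'(t_0))^2>0$), then supplies $\sigma$, while the concavity relation furnishes the $bJ^{2+1/\gamma_1}$ term. \textbf{This coefficient bookkeeping --- handling the memory term $(g'\circ\nabla u)$ and the fractional boundary integrals simultaneously, so that every cross term in $(F')^2$ is controlled by a product of one term of $F$ and one term of $F''$ --- is the step I expect to be the main obstacle.}

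Finally I would invoke Lemma 4 with $\delta=\gamma_1$: the sign of $b$ and the smallness of $J(t_0)$ are fixed by the three cases on $E(0)$, and its alternatives $\mathbf{(i)}$--$\mathbf{(iii)}$ reproduce the bounds $(5.34)$--$(5.39)$ (the tangent-line bounds $(5.34)$, $(5.36)$ already following from concavity alone). At the resulting finite $T^{*}$ one has $J(T^{*})=0$, i.e.\ $F(t)\to+\infty$. To conclude in the sense of Definition 1 I argue by contradiction: if $\|\nabla u\|_2$ stayed bounded on $[0,T^{*})$, then Lemma 10 would bound $H(t)$ and the Poincar\'e inequality would bound $\|u\|_2^2$ and $\int_0^t\|u\|_2^2\,ds$, forcing $F$ to remain bounded on $[0,T^{*})$ and contradicting $F\to+\infty$. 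Hence $\|\nabla u\|_2$ is unbounded as $t\to T^{*-}$, which is precisely $(2.15)$.
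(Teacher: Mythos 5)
Your skeleton is the same as the paper's: Lemma 12 supplies the sign of $F'-a\|u_0\|_2^2$, a Levine-type concavity computation feeds Lemma 4 with $\delta=\gamma_1$, and the passage from blow-up of $F$ to blow-up of $\|\nabla u\|_2$ in the sense of Definition 1 is exactly the paper's closing contradiction with Lemma 10. The genuine gap sits in the step you yourself single out as the main obstacle, and it originates in your definition of $J$. You set $J(t)=F(t)^{-\gamma_1}$, whereas the theorem (and the paper, in $(5.40)$) uses $J(t)=\left[F(t)+a(T-t)\|u_0\|_2^2\right]^{-\gamma_1}$. That extra linear-in-$t$ term is not decorative: differentiating the correct base gives $F'(t)-a\|u_0\|_2^2$ while the second derivative of the base is still $F''(t)$, so the quantity to be controlled is $G(t)=F''\left[F+a(T-t)\|u_0\|_2^2\right]-(1+\gamma_1)\left(F'-a\|u_0\|_2^2\right)^2$ as in $(5.43)$. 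By the identity $(5.26)$/$(5.45)$, $F'-a\|u_0\|_2^2=2\mathbf{B}$ is a pure sum of bilinear pairings, so the discriminant inequality $\mathbf{B}^2\le\mathbf{A}\mathbf{C}$ of $(5.48)$--$(5.50)$ (which is your ``each cross term is an $F$-term times an $F''$-term'', since $\mathbf{A}=F$ and $\mathbf{C}$ sits inside $F''$) closes the estimate exactly, because $4(1+\gamma_1)=p$. With your $J$, the engine you state, $FF''-(\gamma_1+1)(F')^2\ge 0$, involves $(F')^2=\left(2\mathbf{B}+a\|u_0\|_2^2\right)^2$ instead; running the same bookkeeping leaves the deficit $-p\,a\|u_0\|_2^2\,\mathbf{B}-\tfrac{p}{4}a^2\|u_0\|_2^4$, which is strictly negative, grows with $\mathbf{B}$ (and $\mathbf{B}>0$ is unbounded in the blow-up regime), and has no remaining positive quantity to absorb it once $\mathbf{AC}-\mathbf{B}^2\ge0$ has been spent. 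So Cauchy--Schwarz does not deliver your inequality, even in cases (i)--(ii). Your remark that the shift by $a\|u_0\|_2^2$ ``reabsorbs the damping terms'' is right in spirit, but that shift appears in $J'$ only if it is built into the base of $J$; that is precisely the role of the term you dropped.

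A second, smaller inaccuracy: concavity $J''\le 0$ is the engine only when $E(0)\le 0$. In case (iii), $(5.51)$--$(5.52)$ give $J''\le\frac{p^2-4p}{2}E(0)J^{1+1/\gamma_1}$ with a positive right-hand side, so $J$ need not be concave and your claim $(J')^2\ge (J'(t_0))^2$ can fail; the paper instead multiplies $(5.52)$ by $J'<0$ (the sign coming from Lemma 12) and integrates from $t_0$, which is exactly how $(5.53)$--$(5.54)$ arise: the $E(0)$ floor of Lemma 11 generates $b=\frac{p(p-4)^2}{2p-4}E(0)$, while $\sigma=J'(t_0)^2-bJ(t_0)^{2+1/\gamma_1}$ comes from the initial data --- the opposite of your attribution. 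Once $(5.53)$--$(5.54)$ are established, your invocation of Lemma 4 (including the tangent-line bounds in cases (i)--(ii)) and the final contradiction with Lemma 10 coincide with the paper's argument.
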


Note that $t_0 =0$ in cases (ii) and (iii). For case (i), we have as in
(5.23): $t_0=t^*$.

\begin{proof}
Consider
\begin{equation}
J(t)=\left[ F(t)+a(T-t)\Arrowvert u_{0}\Arrowvert_{2}^{2}\right] ^{-\gamma
_{1}},\quad t\in \lbrack t_{0},T].  \label{5.40}
\end{equation}%
We differentiate on $J(t)$ to get
\begin{equation}
J^{^{\prime }}(t)=-\gamma _{1}J(t)^{1+\frac{1}{\gamma _{1}}}\left[ F^{\prime
}(t)-a\Arrowvert u_{0}\Arrowvert_{2}^{2}\right]  \label{5.41}
\end{equation}%
and again
\begin{equation}
J^{^{\prime \prime }}(t)=-\gamma _{1}J(t)^{1+\frac{2}{\gamma _{1}}}G(t),
\label{5.42}
\end{equation}%
where
\begin{equation}
G(t)=F^{^{\prime \prime }}(t)\left[ F(t)+a(T-t)\Arrowvert u_{0}\Arrowvert%
_{2}^{2}\right] -(1+\gamma _{1})\left\{ F^{^{\prime }}(t)-a\Arrowvert u_{0}%
\Arrowvert_{2}^{2}\right\} ^{2}.  \label{5.43}
\end{equation}%
Using $(5.14)$, we obtain
\begin{equation*}
\begin{split}
& F^{\prime \prime }(t)\geq (p+2)\Arrowvert u_{t}\Arrowvert_{2}^{2} \\
& +2p\left\{ -E(0)+a\int_{0}^{t}\Arrowvert u_{s}\Arrowvert_{2}^{2}ds-\frac{1%
}{2}\left( 1-\int_{0}^{t}g(s)ds\right) \Arrowvert\nabla u\Arrowvert_{2}^{2}-%
\frac{1}{2}\left( g\circ \nabla u\right) \left( t\right) \right. . \\
& \left. +b_{1}\int_{0}^{t}\int_{\Gamma _{0}}\int_{-\infty }^{+\infty }(\xi
^{2}+\eta )|\phi (\xi ,s)|^{2}d\xi d\rho ds\right\}
\end{split}%
\end{equation*}%
Consequently
\begin{equation}
\begin{split}
& F^{\prime \prime }(t)\geq -2pE(0) \\
& p\left\{ \Arrowvert u_{t}\Arrowvert_{2}^{2}+a\int_{0}^{t}\Arrowvert u_{s}%
\Arrowvert_{2}^{2}ds-\frac{1}{2}\left( 1-\int_{0}^{t}g(s)ds\right) \Arrowvert%
\nabla u\Arrowvert_{2}^{2}-\frac{1}{2}\left( g\circ \nabla u\right) \left(
t\right) \right. \\
& \left. +b_{1}\int_{0}^{t}\int_{\Gamma _{0}}\int_{-\infty }^{+\infty }(\xi
^{2}+\eta )|\phi (\xi ,s)|^{2}d\xi d\rho ds\right\}.
\end{split}
\label{5.44}
\end{equation}%
Or, from from $(5.15)$ and the fact that $\Arrowvert u\Arrowvert_{2}^{2}-%
\Arrowvert u_{0}\Arrowvert_{2}^{2}=2\int_{0}^{t}\int_{\Omega }u_{s}udxds$,
we attain
\begin{equation}
\begin{split}
F^{\prime }(t)-a\Arrowvert u_{0}\Arrowvert_{2}^{2}& =2\int_{\Omega
}uu_{t}dx+2a\int_{0}^{t}\int_{\Omega }u_{s}udxds \\
& +2b_{1}\int_{0}^{t}\int_{\Gamma _{0}}\int_{-\infty }^{+\infty }(\xi
^{2}+\eta )\phi (\xi ,s)\int_{0}^{s}\phi (\xi ,z)dzd\xi d\rho ds.
\end{split}
\label{5.45}
\end{equation}%
Going back to $(5.43)$ with $(5.44)$ and $(5.45)$ in hand, we get

\begin{equation}
\begin{split}
& G(t)\geq -2pE(0)J(t)^{\frac{-1}{\gamma _{1}}} \\
& +p\left\{ \Arrowvert u_{t}\Arrowvert_{2}^{2}+a\int_{0}^{t}\Arrowvert u_{s}%
\Arrowvert_{2}^{2}ds-\frac{1}{2}\left( 1-\int_{0}^{t}g(s)ds\right) \Arrowvert%
\nabla u\Arrowvert_{2}^{2}-\frac{1}{2}\left( g\circ \nabla u\right) \left(
t\right) \right. \\
& \left. +b_{1}\int_{0}^{t}\int_{\Gamma _{0}}\int_{-\infty }^{+\infty }(\xi
^{2}+\eta )|\phi (\xi ,s)|^{2}d\xi d\rho ds\right\} \\
& \times \left[ \Arrowvert u\Arrowvert_{2}^{2}+a\int_{0}^{t}\Arrowvert u%
\Arrowvert_{2}^{2}ds-\frac{1}{2}\left( 1-\int_{0}^{t}g(s)ds\right) \Arrowvert%
\nabla u\Arrowvert_{2}^{2}-\frac{1}{2}\left( g\circ \nabla u\right) \left(
t\right) \right. \\
& \left. +b_{1}\int_{0}^{t}\int_{\Gamma _{0}}\int_{-\infty }^{+\infty }(\xi
^{2}+\eta )\left( \int_{0}^{s}\phi (\xi ,z)dz\right) ^{2}d\xi d\rho ds\right]
\\
& -4(1+\gamma _{1})\left\{ \int_{\Omega }uu_{t}dx+a\int_{0}^{t}\int_{\Omega
}u_{s}udxds+\frac{1}{2}g\left( t\right) \Arrowvert\nabla u\Arrowvert_{2}^{2}-%
\frac{1}{2}\left( g^{\prime }\circ \nabla u\right) \left( t\right) \right. .
\\
& \left. +b_{1}\int_{0}^{t}\int_{\Gamma _{0}}\int_{-\infty }^{+\infty }(\xi
^{2}+\eta )\phi (\xi ,s)\int_{0}^{s}\phi (\xi ,z)dzd\xi d\rho ds\right\}
^{2}.
\end{split}
\label{5.46}
\end{equation}%
For sake of simplicity, we introduce the following notations

\begin{equation*}
\begin{split}
\mathbf{A}& =\Arrowvert u\Arrowvert_{2}^{2}+a\int_{0}^{t}\Arrowvert u%
\Arrowvert_{2}^{2}ds-\frac{1}{2}\left( 1-\int_{0}^{t}g(s)ds\right) \Arrowvert%
\nabla u\Arrowvert_{2}^{2}-\frac{1}{2}\left( g\circ \nabla u\right) \left(
t\right) \\
& +b_{1}\int_{0}^{t}\int_{\Gamma _{0}}\int_{-\infty }^{+\infty }(\xi
^{2}+\eta )\left( \int_{0}^{s}\phi (\xi ,z)dz\right) ^{2}d\xi d\rho ds, \\
\mathbf{B}& =\int_{\Omega }uu_{t}dx+a\int_{0}^{t}\int_{\Omega }u_{s}udxds+%
\frac{1}{2}g\left( t\right) \Arrowvert\nabla u\Arrowvert_{2}^{2}-\frac{1}{2}%
\left( g^{\prime }\circ \nabla u\right) \left( t\right) \\
& +b_{1}\int_{0}^{t}\int_{\Gamma _{0}}\int_{-\infty }^{+\infty }(\xi
^{2}+\eta )\phi (\xi ,s)\int_{0}^{s}\phi (\xi ,z)dzd\xi d\rho ds, \\
\mathbf{C}& =\Arrowvert u_{t}\Arrowvert_{2}^{2}+a\int_{0}^{t}\Arrowvert u_{s}%
\Arrowvert_{2}^{2}ds-\frac{1}{2}\left( 1-\int_{0}^{t}g(s)ds\right) \Arrowvert%
\nabla u\Arrowvert_{2}^{2}-\frac{1}{2}\left( g\circ \nabla u\right) \left(
t\right) \\
& +b_{1}\int_{0}^{t}\int_{\Gamma _{0}}\int_{-\infty }^{+\infty }(\xi
^{2}+\eta )|\phi (\xi ,s)|^{2}d\xi d\rho ds.
\end{split}%
\end{equation*}%
Therefore
\begin{equation}
Q(t)\geq -2pE(0)J(t)^{\frac{-1}{\gamma _{1}}}+p\left\{ \mathbf{A}\mathbf{C}-%
\mathbf{B}^{2}\right\} .  \label{5.47}
\end{equation}%
Note that, $\forall w\in R$ and $\forall t>0$,

\begin{equation}
\begin{split}
\mathbf{A}w^{2}+2\mathbf{B}w+\mathbf{C}& =\left[ w^{2}\Arrowvert u\Arrowvert%
_{2}^{2}+2w\int_{\Omega }uu_{t}dx+\Arrowvert u_{t}\Arrowvert_{2}^{2}\right]
\\
& +a\int_{0}^{t}\left[ w^{2}\Arrowvert u\Arrowvert_{2}^{2}+2w\int_{\Omega
}uu_{s}dx+\Arrowvert u_{s}\Arrowvert_{2}^{2}\right] ds \\
& +\left( w^{2}+1\right) \left( -\frac{1}{2}\left(
1-\int_{0}^{t}g(s)ds\right) \Arrowvert\nabla u\Arrowvert_{2}^{2}-\frac{1}{2}%
\left( g\circ \nabla u\right) \left( t\right) \right) \\
& +w\left( \frac{1}{2}g\left( t\right) \Arrowvert\nabla u\Arrowvert_{2}^{2}-%
\frac{1}{2}\left( g^{\prime }\circ \nabla u\right) \left( t\right) \right) \\
& +b_{1}\int_{0}^{t}\int_{\Gamma _{0}}\int_{-\infty }^{+\infty }(\xi
^{2}+\eta )\left[ w^{2}\left( \int_{0}^{s}\phi (\xi ,z)dz\right) ^{2}\right.
\\
& \quad \quad \quad \left. +2w\phi (\xi ,s)\int_{0}^{s}\phi (\xi ,z)dz+|\phi
(\xi ,s)|^{2}\right] d\xi d\rho ds.
\end{split}
\label{5.48}
\end{equation}%
Hence

\begin{equation}
\begin{split}
\mathbf{A}w^{2}& +2\mathbf{B}w+\mathbf{C}=\left[ w\Arrowvert u\Arrowvert_{2}+%
\Arrowvert u_{t}\Arrowvert_{2}\right] ^{2}+a\int_{0}^{t}\left[ w\Arrowvert u%
\Arrowvert_{2}+\Arrowvert u_{s}\Arrowvert_{2}\right] ^{2}ds \\
& +\left( w^{2}+1\right) \left( -\frac{1}{2}\left(
1-\int_{0}^{t}g(s)ds\right) \Arrowvert\nabla u\Arrowvert_{2}^{2}-\frac{1}{2}%
\left( g\circ \nabla u\right) \left( t\right) \right) +w\left( \frac{1}{2}%
g\left( t\right) \Arrowvert\nabla u\Arrowvert_{2}^{2}-\frac{1}{2}\left(
g^{\prime }\circ \nabla u\right) \left( t\right) \right) \\
& +b_{1}\int_{0}^{t}\int_{\Gamma _{0}}\int_{-\infty }^{+\infty }(\xi
^{2}+\eta )\left[ w\int_{0}^{s}\phi (\xi ,z)dz+|\phi (\xi ,s)|\right]
^{2}d\xi d\rho ds.
\end{split}
\label{5.49}
\end{equation}%
It is clear that
\begin{equation*}
\mathbf{A}w^{2}+2\mathbf{B}+\mathbf{C}\geq 0
\end{equation*}%
and
\begin{equation}
\mathbf{B}^{2}-\mathbf{A}\mathbf{C}\leq 0.  \label{5.50}
\end{equation}%
Then, from $(5.47)$ and $(5.50)$, we obtain
\begin{equation}
G(t)\geq -2pE(0)J(t)^{\frac{-1}{\gamma _{1}}},\quad t\geq t_{0}.
\label{5.51}
\end{equation}%
Hence, by $(5.42)$ and $(5.51)$
\begin{equation}
J^{\prime \prime }(t)\leq \frac{p^{2}-4p}{2}E(0)J(t)^{1+\frac{1}{\gamma _{1}}%
},\quad t\geq t_{0}.  \label{5.52}
\end{equation}%
Or, by Lemma [12], $J^{^{\prime }}(t)<0$, where $t\geq t_{0}$.

A multiplication by $J^{^{\prime }}(t)$ in $(5.52)$, followed by an
integration from $t_{0}$ to $t$ leads to

\begin{equation}
J^{^{\prime }}(t)^{2}\geq \sigma +bJ(t)^{2+\frac{1}{\gamma _{1}}},
\label{5.53}
\end{equation}%
where
\begin{equation}
\left\{
\begin{split}
\sigma & =\left[ \frac{(p-4)^{2}}{16}\left( F^{^{\prime }}(t_{0})-\Arrowvert %
u_{0}\Arrowvert_{2}^{2}\right) ^{2}-\frac{p(p-4)^{2}}{2p-4}E(0)J(t_{0})^{%
\frac{-1}{\gamma _{1}}}\right] J(t_{0})^{2+\frac{2}{\gamma _{1}}} \\
b& =\frac{p(p-4)^{2}}{2p-4}E(0).
\end{split}%
\right.  \label{5.54}
\end{equation}%
Note that $\sigma >0$, is equivalent to $E(0)<\frac{(2p-4)\left( F^{\prime
}(t_{0})-a\Arrowvert u_{0}\Arrowvert_{2}^{2}\right) ^{2}J(t_{0})^{\frac{1}{%
\gamma _{1}}}}{16p}$, which by Lemma [4] ensure the existence of a finite
time $T^{\ast }>0$ such that
\begin{equation*}
\lim_{t\rightarrow T^{\ast -}}J\left( t\right) =0.
\end{equation*}%
That involves

\begin{equation}
\lim_{t\rightarrow T^{\ast -}}\left[ \Arrowvert u\Arrowvert%
_{2}^{2}+a\int_{0}^{t}\Arrowvert u\Arrowvert_{2}^{2}ds-\frac{1}{2}\left(
1-\int_{0}^{t}g(s)ds\right) \Arrowvert\nabla u\Arrowvert_{2}^{2}-\frac{1}{2}%
\left( g\circ \nabla u\right) \left( t\right) +b_{1}H(t)\right] ^{-1}=0.
\label{5.55}
\end{equation}%
i.e.
\begin{equation}
\lim_{t\rightarrow T^{\ast -}}\left[ \Arrowvert u\Arrowvert%
_{2}^{2}+a\int_{0}^{t}\Arrowvert u\Arrowvert_{2}^{2}ds-\frac{1}{2}\left(
1-\int_{0}^{t}g(s)ds\right) \Arrowvert\nabla u\Arrowvert_{2}^{2}-\frac{1}{2}%
\left( g\circ \nabla u\right) \left( t\right) +b_{1}H(t)\right] =+\infty .
\label{5.56}
\end{equation}%
So, there exists a $T$ such that $t_{0}<T\leq T^{*}$ and $\Arrowvert\nabla u%
\Arrowvert_{2}^{2}\longrightarrow +\infty $ as $t\longrightarrow T^{-}$.%
\newline

Indeed, if it is not the case, then $\Arrowvert\nabla u\Arrowvert_{2}^{2}$
remained bounded on $[t_{0},T^{*})$, which by Lemma [10] leads to
\begin{equation*}
\lim_{t\rightarrow T^{* -}}\left[ \Arrowvert u\Arrowvert_{2}^{2}+b_{1}H(t)%
\right] =C<+\infty ,
\end{equation*}%
contradicting (\ref{5.56}).
\end{proof}

\section{Conclusion}

Much attention has been accorded to fractional partial differential
equations during the past two decades due to the many chemical engineering,
biological, ecological and electromagnetism phenomena that are modeled by
initial boundary value problems with fractional boundary conditions. In the
context of boundary dissipations of fractional order problems, the main
research focus is on asymptotic stability of solutions starting by writing
the equations as an augmented system. Then, various techniques are used such
as LaSalle's invariance principle and multiplier method mixed with frequency
domain. we prove in this paper under suitable conditions on the initial data
the stability of a wave equation with fractional damping and memory term.
This technique of proof was recently used by \cite{R20} to study the
exponential decay of a system of nonlocal singular viscoelastic equations.
\newline
Here we also considered three different cases on the sign of the initial
energy as recently examined by Zarai and al \cite{20}, where they studied
the blow up of a system of nonlocal singular viscoelastic equations.\newline
In next work, we try to extend the same study of this paper to a general
source term case. \newline
\newline
\textbf{\textit{\ Acknowledgement}} \newline
For any decision, the authors are grateful to the anonymous referees for the
careful reading and their important observations/suggestions for sake of
improving this paper. In memory of Mr. Mahmoud ben Mouha Boulaaras
(1910--1999).


\begin{thebibliography}{99}
\bibitem{1} Aassila M, Cavalcanti MM, Domingos Cavalcanti VN. Existence and
uniform decay of the wave equation with nonlinear boundary damping and
boundary memory source term. Calc Var Partial Differ Equ. 2002;15:155-180.
https://link.springer.com/article/10.1007/s005260100096

\bibitem{2} Akil M, Wehbe A. Stabilization of multidimensional wave equation
with locally boundary fractional dissipation law under geometric condition.
Math Control Relat Fields. 2019; 9(1):97-116.
http://dx.doi.org/10.3934/mcrf.2019005.

\bibitem{3} Achouri Z, Amroun NE, Benaissa A. The Euler-Bernoulli beam
equation with boundary dissipation of fractional derivative type. Math
Methods Appl Sci. 2017;40:3837-3854. https://doi.org/10.1002/mma.4267.

\bibitem{4} Alabau-Boussouira F, Pr\"{u}ss J, Zacher R. Exponential and
polynomial stability of a wave equation for boundary memory damping with
singular kernels. C R Acad Sci Paris Ser. 2009;I347:277-282.
https://doi.org/10.1016/j.crma.2009.01.005.

\bibitem{5} Alabau-Boussouira F. asymptotic stability of wave equations with
memory and fractional boundary dampings. Applications Mathematicae.
2008;35:247-258. https://dx.doi.org/10.4064/am35-3-01.

\bibitem{6} Blanc E, Chiavassa G, Lombard B. Biot-JKD model: Simulation of
1D transient poroelastic waves with fractional derivatives. J Comput Phys.
2013;237:1-20. https://doi.org/10.1016/j.jcp.2012.12.003.

\bibitem{7} Gala, S., Ragusa, M. A: Logarithmically improved regularity
criterion for the Boussinesq equations in Besov spaces with negative
indices, Applicable Analysis \textbf{95} (6), 1271-1279, (2016)

\bibitem{8} Dai H, Zhang H. Exponential growth for wave equation with
fractional boundary dissipation and boundary source term. Boundary Value
Prob. 2014;2014(1):1-8. https://doi.org/10.1186/s13661-014-0138-y.

\bibitem{9} Draifia A, Zarai A, Boulaaras S. Global Existence and Decay of
Solutions of a Singular Nonlocal Viscoelastic System. Rend Circ Mat Palermo
II Ser. 2018. https://doi.org/10.1007/s12215-018-00391-z.

\bibitem{10} Gala, S., Liu, Q., Ragusa, M. A: A new regularity criterion for
the nematic liquid crystal fows, Applicable Analysis \textbf{91} (9),
1741-1747 (2012).

\bibitem{11} Kirane M, Tatar Ne. Non-existence results for a semilinear
hyperbolic problem with boundary conditions of memory type. Journal of
Analysis and Its Applications. 2000;19:453-468.

\bibitem{12} Alizadeh, M., Alimohammady, M: Regularity and entropy solutions
of some elliptic equations, Miskolc Mathematical Notes \textbf{19}(2),
(2018), 715-729.

\bibitem{13} Polidoro, S., Ragusa, M. A: Harnack inequality for hypoelliptic
ultraparabolic equa- tions with a singular lower order term, Revista
Matematica Iberoamericana 24 (3), 1011-1046 (2008).

\bibitem{14} Mbodje B. Wave energy decay under fractional derivative
controls. IMA Journal of Mathematical Control and Information.
2006;23:237-257. https://doi.org/10.1093/imamci/dni056

\bibitem{15} Mu\~{n}oz-V\'{a}zquez AJ, Parra-Vega V, S\'{a}nchez-Orta A,
Romero-Galv\'{a}n R. Output feedback finite-time stabilization of systems
subject to H\"{o}lder disturbances via continuous fractional sliding modes.
Math Prbl Eng. 2017.https://doi.org/10.1155/2017/3146231

\bibitem{16} Matsuyama T, Ikerata R. On global solutions and energy decay
for the wave equations of Kirchhoff type with nonlinear damping terms. J
Math Anal Appl. 1996;204:729-753. https://doi.org/10.1006/jmaa.1996.0464

\bibitem{17} Naifar O, Makhlouf A, Hammami M, Chen L. Global practical
Mittag Leffler stabilization by output feedback for a class of nonlinear
fractional-order systems. Asian J Control. 2018;20(1):599-607.
https://doi.org/10.1002/asjc.1576.

\bibitem{R20} Aounallah R, Boulaaras S, Zarai A and Cherif B, General decay
and blow up of solution for a nonlinear wave equation with a fractional
boundary damping, Math Methods Appl Sci. 2020;
https://doi.org/10.1002/mma.6455

\bibitem{18} Tarasov VE. Fractional Dynamics: Applications of Fractional
Calculus to Dynamics of Particles. New York: Fields and Media, Springer;
2011. https://doi.org/10.1007/978-3-642-14003-7.

\bibitem{19} Val\'{e}rio D, Machado J, Kiryakova V. Some pioneers of the
applications of fractional calculus. Frac Calc Appl Anal.
2014;17(2):552-578. https://doi.org/10.2478/s13540-014-0185-1.

\bibitem{20} Zarai A, Draifia A, Boulaaras S. Blow up of solutions for a
system of nonlocal singular viscoelastic equations. Appl Anal.
2018;97:2231-2245. https://doi.org/10.1080/00036811.2017.1359564.

\bibitem{21} Zhou HC, Guo BZ. Boundary feedback stabilization for an
unstable time fractional reaction diffusion equation. SIAM J. Control Optim.
2018;56: 75-101. https://doi.org/10.1137/15M1048999.
\end{thebibliography}
\end{document}